\newtheorem{theorem}{Theorem}[section] 
\newtheorem{lemma}[theorem]{Lemma}     
\newtheorem{corollary}[theorem]{Corollary}
\newtheorem{proposition}[theorem]{Proposition}
\newcommand{\be}{\begin{equation}}
\newcommand{\ee}{\end{equation}}
\def\N{\mathbb{N}}
\def\R{\mathbb{R}}
\def\Z{\mathbb{Z}}
\def\E{\mathbb{E}}
\def\cI{\mathcal{I}}
\def\cL{\mathcal{L}}
\def\cT{\mathcal{T}}
\providecommand{\re}{\mathop{\rm Re}}
\providecommand{\im}{\mathop{\rm Im}}
\title[An upper bound for moments of $\zeta'(\rho)$]{An upper bound for discrete moments of the derivative of the Riemann zeta-function}
\author{S. Kirila}
\begin{document}
	\maketitle
\begin{abstract}
	Assuming the Riemann hypothesis, we establish an upper bound for the $2k$-th discrete moment of the derivative of the Riemann zeta-function at nontrivial zeros, where $k$ is a positive real number. Our upper bound agrees with conjectures of Gonek and Hejhal and of Hughes, Keating, and O'Connell. This sharpens a result of Milinovich. Our proof builds upon a method of Adam Harper concerning continuous moments of the zeta-function on the critical line.
\end{abstract}

\section{Introduction}

The estimation of various types of moments of the Riemann zeta-function has been intensely studied for the better part of a century. The zeta-function is given by
\[
\zeta(s)=\sum_{n=1}^{\infty}n^{-s},
\]
where $s=\sigma+it$ denotes a complex variable with real part $\sigma$ and imaginary part $t$. This definition is valid for $\sigma>1$, but $\zeta(s)$ may be continued analytically to the rest of the complex plane except for a simple pole at $s=1$. The specific moments we examine here are defined as
\[
J_k(T):=\frac{1}{N(T)}\sum_{0<\im(\rho)\le T}|\zeta'(\rho)|^{2k}
\]
for $k$ a positive real number. The sum here is over nontrivial zeros $\rho$ of the zeta-function, i.e. those zeros with positive real part. The normalizing factor $N(T)$ is the number of $\rho$ over which we are summing, so $J_k(T)$ is the $2k$-th moment of $|\zeta'(s)|$ on the discrete probability space
\[
\{\rho:\zeta(\rho)=0,\re(\rho)>0\text{ and }0<\im(\rho)\le T\}
\]
equipped with the uniform measure. Consequently $J_k(T)$ is commonly called a discrete moment in the literature. The more information we have regarding $J_k(T)$, the more we can say about the distribution of values of $|\zeta'(\rho)|$.

These discrete moments were first studied by Gonek~\cite{Go84}, who conditionally established the asymptotic formula
\[
J_1(T)\sim \tfrac{1}{12}(\log T)^3
\]
with an explicit error term. Gonek's proof relies inherently on the validity of the Riemann hypothesis (RH), the statement that all nontrivial zeros $\rho$ have real part $\re(\rho)=\frac12$. No asymptotic formulas are known for other values of $k$, even assuming RH, and Gonek's estimate has yet to be proved unconditionally. Gonek~\cite{Go89} and Hejhal~\cite{He} independently conjectured
\be\label{GoHe conjecture}
J_k(T)\asymp(\log T)^{k(k+2)}
\ee
for any real number $k$. This agrees with Gonek's estimate for $J_1(T)$. Ng~\cite{Ng} proved $J_2(T)\asymp (\log T)^8$, so the conjecture also holds for $k=2$.
This conjecture has since been strengthened. Using a random matrix model for $\zeta'(\rho)$, Hughes, Keating, and O'Connell~\cite{HuKeOc} suggested the asymptotic formula
\[
J_k(T)\sim C_k(\log T)^{k(k+2)}.
\]
The constants $C_k$ in their conjecture are explicit, given by
\[
C_k=\frac{G(k+2)^2}{G(2k+3)}\prod_{p\text{ prime}}\left(1-\frac{1}{p}\right)^{k^2}\sum_{m=0}^{\infty}\left(\frac{\Gamma(m+k)}{m!\,\Gamma(k)}\right)^2p^{-m},
\]
where $G(x)$ is the Barnes $G$-function. Furthermore, they provided a heuristic explanation which suggests that the conjectured asymptotic formula of Gonek and Hejhal should fail for $k\le-\frac32$. Hughes, Keating, and O'Connell inserted the product over primes here was inserted in an \emph{ad hoc} manner, namely from a heuristic estimate for the case $k=-1/2$. Bui, Gonek, and Milinovich~\cite{BuGoMi} used a hybrid Euler-Hadamard product model for $\zeta'(\rho)$ to suggest precisely where this product over primes comes from, essentially merging ideas from number theory and random matrix theory in the same way as Gonek, Hughes, and Keating~\cite{GoHuKe} did for moments of $\zeta(\frac12+it)$.

These conjectures remain open, but work has been done toward the implied upper and lower bounds conditionally on RH. Milinovich and Ng~\cite{MiNg} obtained the expected lower bound
\be\label{MiNg}
J_k(T)\gg_k (\log T)^{k(k+2)}
\ee
for any natural number $k$. In the other direction, Milinovich\cite{Mi} showed that
\be\label{Mi}
J_k(T)\ll_{k,\varepsilon} (\log T)^{k(k+2)+\varepsilon}
\ee
for any $\varepsilon>0$. The purpose of this paper is to remove the $\varepsilon$ in the exponent here and prove the following.

\begin{theorem}\label{Thm 1}
	Assume RH. Let $k>0$. Then $$J_k(T)\ll_k(\log T)^{k(k+2)}$$
	as $T\to \infty$.
\end{theorem}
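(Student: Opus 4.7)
The strategy is to adapt Adam Harper's method for sharp conditional upper bounds on continuous moments of $\zeta(\tfrac12+it)$ to the discrete setting of sums over ordinates of nontrivial zeros. A rough accounting explains why such an adaptation has a chance: Harper's theorem gives $\int_0^T|\zeta(\tfrac12+it)|^{2k}\,dt\ll T(\log T)^{k^2}$ on RH, whereas Theorem~\ref{Thm 1} asks for $\sum_\rho|\zeta'(\rho)|^{2k}\ll T(\log T)^{k^2+2k+1}$. The discrepancy $(\log T)^{2k+1}$ should be explained by the discrete-to-continuous transition (about $\log T$ zeros per unit interval) together with the passage from $\zeta$ on the critical line to $\zeta'$ at zeros, which heuristically contributes $(\log T)^{2k}$.

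I would proceed in two stages. In the first, set $\sigma_0=\tfrac12+\alpha/\log T$ for a small fixed $\alpha>0$, and bound $|\zeta'(\rho)|^{2k}$ using the subharmonicity of $|\zeta(s)/(s-\rho)|^{2k}$ (a function which extends analytically through $\rho$), or equivalently Cauchy's integral formula. An annular mean-value yields
\[
|\zeta'(\rho)|^{2k}\ll(\log T)^{2k+2}\iint_{r_1\le|s-\rho|\le r_2}|\zeta(s)|^{2k}\,dA(s)
\]
for radii $r_1,r_2\asymp 1/\log T$. Summing over $\rho$ and accounting for the overlap of the annuli should give
\[
\sum_{0<\gamma\le T}|\zeta'(\rho)|^{2k}\ll(\log T)^{2k+2}\iint_{D}|\zeta(\sigma+it)|^{2k}\,dA
\]
with $D$ the strip $|\sigma-\tfrac12|\ll 1/\log T$, $0\le t\le T$. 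The second stage is to invoke a uniform version of Harper's bound, $\int_0^T|\zeta(\sigma+it)|^{2k}\,dt\ll T(\log T)^{k^2}$ for $|\sigma-\tfrac12|\ll 1/\log T$, and integrate in $\sigma$ across $D$, gaining a factor of $1/\log T$ from the thickness of the strip. Combining with the $(\log T)^{2k+2}$ prefactor gives $T(\log T)^{k^2+2k+1}\asymp N(T)(\log T)^{k(k+2)}$.

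The principal obstacle is the overlap count in the first stage. On RH, Riemann--von Mangoldt together with $S(T)\ll\log T/\log\log T$ gives only $N(T+h)-N(T-h)\ll\log T/\log\log T$ for $h\asymp 1/\log T$, so naively there may be that many zeros inside a single annulus, costing nearly a full factor of $\log T$. A sharper argument is required --- either a weighted averaging scheme that bypasses the crude overlap, or, more likely in line with the paper's abstract, abandoning the continuous-moment intermediate step altogether and adapting Harper's combinatorial decomposition directly to the discrete ordinate set $\{\gamma\}$. In this direct approach, Harper's partition of $[0,T]$ according to the size of partial prime sums at geometrically spaced cutoffs is transplanted to $\{\gamma:0<\gamma\le T\}$, and the crucial arithmetic input becomes a discrete mean-value estimate
\[
\sum_{0<\gamma\le T}\Bigl|\sum_{n\le N}\frac{a_n}{n^{\sigma_0+i\gamma}}\Bigr|^2\ll N(T)\sum_{n\le N}\frac{|a_n|^2}{n^{2\sigma_0}}
\]
of Gonek type, valid for $N\le T^\theta$ with some fixed $\theta<1$, in place of standard mean-value theorems for Dirichlet polynomials on $[0,T]$. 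Once these discrete estimates and the corresponding higher-moment variants (through H\"older's inequality) are available, Harper's balancing of "typical" and "large" partial prime sums should go through essentially as in the continuous case and yield the sharp exponent $k(k+2)$ without the $\varepsilon$ loss of~\eqref{Mi}.
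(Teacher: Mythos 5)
Your second route (direct adaptation of Harper's decomposition to the discrete ordinate set) is the right framework and is essentially what the paper does, but as sketched it omits the two technical points that carry the actual difficulty, so the proposal has genuine gaps rather than a complete argument.

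First, the proposal never produces a usable pointwise inequality for $\log|\zeta'(\rho)|$. Harper's method begins with Soundararajan's bound $\log|\zeta(\tfrac12+it)|\le\re\sum_{n\le x}\Lambda(n)n^{-\sigma_x-it}(\log n)^{-1}\log(x/n)/\log x+\log T/\log x+O(1)$, and you correctly note that the discrete version must ``transplant'' this to the ordinates. But for $\zeta'(\rho)$ this is not automatic: one must remove the vanishing of $\zeta$ at $\rho$ before taking logarithms. The paper does this by subtracting the single term $\log|\sigma_x-\tfrac12+i(t-\gamma)|$ from both sides of the Hadamard--Stirling identity and then letting $t\to\gamma$, producing (Proposition 3.1) the same Dirichlet polynomial plus an extra $\log\log T$ term, which is ultimately where the additional factor $(\log T)^{2k}$ comes from. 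Your first route (annular means or Cauchy's formula applied to $\zeta(s)/(s-\rho)$) does not deliver such a clean pointwise statement and, as you yourself observe, founders on the overlap count, so it cannot be salvaged into the needed inequality. Without Proposition 3.1 or an equivalent, the decomposition argument has nothing to run on.

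Second, the arithmetic input you posit --- a discrete mean-value estimate
\[
\sum_{0<\gamma\le T}\Bigl|\sum_{n\le N}\frac{a_n}{n^{\sigma_0+i\gamma}}\Bigr|^2\ll N(T)\sum_{n\le N}\frac{|a_n|^2}{n^{2\sigma_0}}
\]
--- is not what the method actually requires, and as a bare statement it is false in the generality you write it. Expanding the square and summing over $\gamma$ produces the diagonal $N(T)\sum|a_n|^2/n^{2\sigma_0}$ plus an off-diagonal which, by Gonek's uniform form of Landau's formula, equals $-\tfrac{T}{2\pi}\sum_{m\ne n}a_n\overline{a_m}\Lambda(m/n)(mn)^{-\sigma_0}/\sqrt{m/n}+\text{error}$. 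This off-diagonal contribution is genuinely of comparable size to the diagonal; the estimate survives only because, for the particular coefficients $a_n\ge0$ (weighted von Mangoldt) that arise, the off-diagonal has a definite \emph{negative} sign and can be discarded in an upper bound. The paper makes this precise in Lemma 5.1 (Landau's formula) and Lemma 5.3 (the mixed-moment estimate), where the $-T/\pi$ secondary term is explicitly dropped. Your sketch presents the mean-value bound as a black box; the crucial and nontrivial observation that the Landau secondary term points in the favorable direction --- and that this, rather than a genuine orthogonality, is what powers the discrete analogue of $\int_T^{2T}e^{irt}\,dt=T\delta_0(r)+O(1/r)$ --- is exactly the new idea the paper needs, and it is absent from the proposal. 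Filling these two gaps would bring your Route B into line with the paper's argument.
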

\noindent Together with \eqref{MiNg}, this shows that
\[
J_k(T)\asymp_k (\log T)^{k(k+2)}
\]
for $k$ a natural number. This proves (on RH) the conjecture of Gonek and Hejhal for $k$ a positive natural number. In fact, the method of Milinovich and Ng used to prove \eqref{MiNg} should be able to cover the case of real $k>0$ using the work of Radziwi\l\l\ and Soundararajan in \cite{RaSo} and \cite{RaSo2}, assuming RH. This would establish the conjecture of Gonek and Hejhal for all real, positive $k$. We remark that the implied constant in Theorem~\ref{Thm 1}  grows like $ e^{e^{Ak}}$ for some $A>0$ as $k$ gets large. For comparison, the conjecture of Hughes, Keating, and O'Connell suggests an implied constant $\approx e^{-k^2\log k}$ is permissible.

In the last section of the paper, we shall also indicate how to prove the following result.

\begin{theorem}\label{Thm 2}
	Assume RH. Let $k>0$. Let $\alpha$ be a complex number with $|\alpha|\le (\log T)^{-1}$. Then
	\[
	\frac{1}{N(T)}\sum_{0<\im(\rho)\le T}|\zeta(\rho+\alpha)|^{2k}\ll_k (\log T)^{k^2}
	\]
	as $T\to \infty$.
\end{theorem}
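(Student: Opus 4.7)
My plan is to mimic the proof of Theorem~\ref{Thm 1}, which adapts Harper's machinery for continuous $2k$-th moments of $\zeta$ to sums over zeros. The relevant continuous input is Harper's bound (assuming RH) $\int_0^T|\zeta(\tfrac{1}{2}+it)|^{2k}\,dt\ll_k T(\log T)^{k^2}$. Since $|\alpha|\le(\log T)^{-1}$ and all zeros lie on the critical line under RH, the evaluation point $\tfrac{1}{2}+i\gamma+\alpha$ stays within the strip $|\sigma-\tfrac{1}{2}|\le(\log T)^{-1}$. Harper's explicit-formula decomposition of $\log|\zeta(\tfrac{1}{2}+it)|$ into dyadic Dirichlet polynomials over primes extends to this strip with error terms of the same order, so every piece of his proof can be evaluated at the shifted zeros instead of along the critical line.

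The additional tool needed is a discretized mean value estimate of the shape
\[
\sum_{0<\im\rho\le T}\bigl|P(\tfrac{1}{2}+i\im\rho+\alpha)\bigr|^2 \ll \log T\cdot\int_0^T\bigl|P(\tfrac{1}{2}+it+\alpha)\bigr|^2\,dt + E,
\]
valid for Dirichlet polynomials $P(s)=\sum_{n\le X}a_n n^{-s}$ with $X\le T^{1-\delta}$ and some acceptable $E$. This is a standard application of Gonek's uniform explicit formula for $\sum_\rho n^{i\im\rho}$; crucially, the multiplicative loss is precisely $\log T$, which equals the density $N(T)/T$. Inserting this bound in place of each continuous $L^2$-average inside Harper's truncation-and-conditioning argument then produces
\[
\sum_{0<\im\rho\le T}|\zeta(\tfrac{1}{2}+i\im\rho+\alpha)|^{2k} \ll_k \log T\cdot T(\log T)^{k^2} = T(\log T)^{k^2+1},
\]
and dividing by $N(T)\asymp T\log T$ recovers the claimed $(\log T)^{k^2}$.

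The main obstacle is bookkeeping: verifying that the discretized $L^2$ estimate may be substituted at each of the several levels of Harper's dyadic decomposition without accumulating further logarithmic losses, and that the errors $E$ remain harmless when summed over all scales. This is the same technical issue addressed in the proof of Theorem~\ref{Thm 1}, and I expect it here to be at worst no harder and if anything simpler, since the explicit-formula approximation of $\log|\zeta|$ is cleaner than the one needed to extract $\zeta'(\rho)$ (where a small contour around each $\rho$ has to be introduced to recover $\zeta'(\rho)$ from values of $\zeta$). The shift $\alpha$ itself plays no essential role beyond moving the evaluation points off the critical line by an amount invisible to the estimates at this scale.
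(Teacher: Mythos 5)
Your high-level outline is reasonable, but there is a genuine gap in the discretization step, and it is precisely where the $\alpha$-dependence becomes essential.

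You propose, as a black box, a ``discretized mean value estimate''
\[
\sum_{0<\im\rho\le T}\bigl|P(\tfrac{1}{2}+i\im\rho+\alpha)\bigr|^2 \ll \log T\cdot\int_0^T\bigl|P(\tfrac{1}{2}+it+\alpha)\bigr|^2\,dt + E,
\]
and you call this a ``standard application'' of Gonek's uniform explicit formula. It is not. When you expand $|P|^2$ and apply Gonek's formula (Lemma~\ref{Lem-Landau}) term by term, the diagonal ($m=n$) yields $N(T)$ times the usual mean square, which indeed matches $\log T$ times the integral. But the off-diagonal pairs contribute a secondary main term of size $-\frac{T}{2\pi}\sum_{m\ne n} a_m\overline{a_n}\,\Lambda(m/n)/\sqrt{mn}\cdot(\text{phase})$, which for the Dirichlet polynomials arising in Harper's decomposition is of the \emph{same order} as the diagonal term. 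There is no general $L^2$ inequality that suppresses it; the argument in the paper only gets to discard it because, in the unshifted case, it is a nonnegative quantity multiplied by $-T/(2\pi)$, hence non-positive. The ``loss is precisely $\log T$'' is therefore not a large-sieve fact but a consequence of a sign argument, and this is the heart of Lemma~\ref{Lem-mixed moments}.

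Your final remark that ``the shift $\alpha$ itself plays no essential role'' is exactly where the new work lies. In the shifted version, the off-diagonal contribution from Gonek's formula acquires an oscillatory factor, and after taking real parts its sign is controlled by
\[
\cos\Bigl(\im(\alpha)\,\log\textstyle\prod_{i,l} n_{i,l}^{e_{i,l}}\Bigr).
\]
To keep the off-diagonal term non-positive (so it can be dropped, as in the unshifted case), one needs this cosine to be nonnegative. The paper secures this by noting that the Dirichlet polynomials are supported on $n\le T^{\beta_{\mathcal I}}$ with the lengths $\ell_i\le 2e^2k\beta_i^{-3/4}$, so the product in the logarithm is at most $T^{e/25}$; combined with $|\im(\alpha)|\le(\log T)^{-1}$ this forces the argument of the cosine to lie in $(-\pi/2,\pi/2)$. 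Without this bound the off-diagonal term can be positive and of the same order as the diagonal, destroying the sharp exponent $k^2$. This is the one genuinely new ingredient in Theorem~\ref{Thm 2} beyond Theorem~\ref{Thm 1}, and your proposal has no substitute for it.

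Secondary points: (i) Harper's argument is not a single $L^2$ estimate but a family of mixed moments $\sum_\gamma\prod_i G_{i,j}(\gamma)^{\ell_i}$ controlled at every scale, so even the framing as ``insert a discretized $L^2$ bound at each level'' undersells the bookkeeping; and (ii) in the shifted setting the weight must be modified to $w_j(n;\alpha)=w_j(n)/n^{\re\alpha}$ so that the random model $G_{i,j}(X;\alpha)$ correctly absorbs $\re\alpha$, a small but necessary change that your sketch omits.
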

\noindent This shifted moment of the zeta-function was considered by Milinovich~\cite{Mi} in his proof of \eqref{Mi}, and our Theorem~\ref{Thm 2} is an improvement of Theorem 1.2 in \cite{Mi}. As a consequence of our result, we deduce the following.

\begin{corollary}
	Assume RH. Let $k\ge \frac12$ and let $\nu$ be a positive integer. Then
	\[
	\frac{1}{N(T)}\sum_{0<\im(\rho)\le T}|\zeta^{(\nu)}(\rho)|^{2k}\ll_{k,\nu} (\log T)^{k(k+2\nu)}
	\]
	as $T\to \infty$.
\end{corollary}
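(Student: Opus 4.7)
The plan is to deduce the corollary from Theorem~\ref{Thm 2} by recovering $\zeta^{(\nu)}(\rho)$ from the values of $\zeta$ on a small circle around $\rho$ via Cauchy's integral formula. Since $\zeta$ is analytic in a neighborhood of every zero $\rho$, for any $r>0$ with the disc $|w-\rho|\le r$ avoiding $s=1$,
\[
\zeta^{(\nu)}(\rho) = \frac{\nu!}{2\pi i} \oint_{|w-\rho|=r} \frac{\zeta(w)}{(w-\rho)^{\nu+1}}\,dw = \frac{\nu!}{2\pi r^{\nu}} \int_0^{2\pi} \zeta(\rho+re^{i\theta})\, e^{-i\nu\theta}\,d\theta.
\]
Setting $r=1/\log T$ and taking absolute values yields
\[
|\zeta^{(\nu)}(\rho)| \le \frac{\nu!\,(\log T)^{\nu}}{2\pi} \int_0^{2\pi} |\zeta(\rho+re^{i\theta})|\,d\theta.
\]

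The hypothesis $k\ge \frac12$ means $2k\ge 1$, so Jensen's inequality applied to the probability measure $d\theta/(2\pi)$ on $[0,2\pi]$ gives
\[
|\zeta^{(\nu)}(\rho)|^{2k} \le (\nu!)^{2k}(\log T)^{2k\nu}\cdot \frac{1}{2\pi}\int_0^{2\pi} |\zeta(\rho+re^{i\theta})|^{2k}\,d\theta.
\]
Summing over the zeros with $0<\im(\rho)\le T$, dividing by $N(T)$, and interchanging the finite sum with the integral over $\theta$ reduces the problem to bounding, uniformly in $\theta$, the quantity
\[
\frac{1}{N(T)} \sum_{0<\im(\rho)\le T} |\zeta(\rho+re^{i\theta})|^{2k}.
\]
Each shift $\alpha=re^{i\theta}$ satisfies $|\alpha|=1/\log T$, so Theorem~\ref{Thm 2} applies and bounds the inner sum by $O_k((\log T)^{k^2})$, with the implied constant independent of $\theta$.

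Combining these estimates gives
\[
\frac{1}{N(T)} \sum_{0<\im(\rho)\le T} |\zeta^{(\nu)}(\rho)|^{2k} \ll_{k,\nu} (\log T)^{2k\nu}\cdot (\log T)^{k^2} = (\log T)^{k(k+2\nu)},
\]
which is precisely the claimed bound. There is really no main obstacle here: the only two points requiring a moment of thought are (i) that the implied constant in Theorem~\ref{Thm 2} is uniform in $\alpha$ throughout the range $|\alpha|\le (\log T)^{-1}$, which is built into the statement, and (ii) that Jensen's inequality goes the correct way, which is exactly where the hypothesis $k\ge \frac12$ enters. The argument therefore reduces the derivative-moment problem to the shifted moment estimate provided by Theorem~\ref{Thm 2}.
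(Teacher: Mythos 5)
Your proposal is correct and follows essentially the same route as the paper: Cauchy's integral formula on the circle of radius $(\log T)^{-1}$, followed by an application of convexity (the paper uses H\"older's inequality with exponent $2k$; you use Jensen's inequality on the normalized arc-length measure, which is the same step), then interchange of sum and integral and an appeal to Theorem~\ref{Thm 2}. The only cosmetic difference is that phrasing the convexity step via Jensen handles the endpoint $k=\tfrac12$ automatically, whereas the paper treats that case as a trivially true degenerate instance of H\"older.
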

\begin{proof}
	This follows from the direct analogue of Lemma 8.1 in \cite{MiNg2}; we provide the details here for the sake of completeness. By Cauchy's integral formula, we have
	\be\label{eq:cauchy}
	\sum_{0<\im(\rho)\le T}|\zeta^{(\nu)}(\rho)|^{2k}=\Big(\frac{\nu!}{2\pi}\Big)^{2k}\sum_{0<\im(\rho)\le T}\bigg|\int_C\frac{\zeta(\rho+\alpha)}{\alpha^{\nu+1}}d\alpha\bigg|^{2k},
	\ee
	where $C$ is the positively-oriented circle of radius $(\log T)^{-1}$ centered at the origin. Since
	\[
	\bigg|\int_C\frac{\zeta(\rho+\alpha)}{\alpha^{\nu+1}}d\alpha\bigg|\le (\log T)^{\nu+1}\int_C|\zeta(\rho+\alpha)|\,|d\alpha|,
	\]
	it follows from \eqref{eq:cauchy} that
	\[
	\sum_{0<\im(\rho)\le T}|\zeta^{(\nu)}(\rho)|^{2k}\le \Big(\frac{\nu!}{2\pi}\Big)^{2k}(\log T)^{2k(\nu+1)}\sum_{0<\im(\rho)\le T}\bigg(\int_C|\zeta(\rho+\alpha)|\,|d\alpha|\bigg)^{2k}.
	\]
	If $k>\frac12$, then H\"older's inequality implies that
	\[
	\bigg(\int_C|\zeta(\rho+\alpha)|\,|d\alpha|\bigg)^{2k}\le \bigg(\int_C|d\alpha|\bigg)^{2k-1}\bigg(\int_C|\zeta(\rho+\alpha)|^{2k}|d\alpha|\bigg);
	\]
	when $k=\frac12$, this same bound trivially holds. The first integral on the right-hand side here is precisely $2\pi (\log T)^{-1}$, and so we see that
	\[
	\sum_{0<\im(\rho)\le T}|\zeta^{(\nu)}(\rho)|^{2k}\le \frac{(\nu !)^{2k}}{2\pi}(\log T)^{2k\nu+1}\int_C\bigg(\sum_{0<\im(\rho)\le T}|\zeta(\rho+\alpha)|^{2k}\bigg)|d\alpha|.
	\]
	The result follows by dividing both sides of this last inequality by $N(T)$, applying Theorem~\ref{Thm 2} to the sum on the right-hand side, and finally integrating over $\alpha$.
\end{proof}
Lastly, we remark on some connections between these discrete moments and simple zeros of $\zeta(s)$. If $N^*(T)$ counts the number of simple zeros with $0<\im(\rho)\le T$, then Cauchy--Schwarz implies that
\[
N^*(T)\ge \frac{J_k(T)^2}{J_{2k}(T)}.
\]
Montgomery's~\cite{Mo} pair correlation conjecture implies that almost all zeros are simple, and it is generally expected that this is true of all zeros. However, $J_k(T)$ grows too quickly as $T\to \infty$ to obtain even a positive proportion of simple zeros via the inequality above. In order to minimize the loss from Cauchy-Schwarz, Conrey, Ghosh and Gonek~\cite{CoGhGo} used a mollified version of $J_1(T)$ to show that at least $19/27$ of the nontrivial zeros are simple, assuming the generalized Riemann Hypothesis\footnote{The statement of their result actually assumes RH and the generalized Lindel\"of Hypothesis. However, it appears that there is a problem with their proof under these assumptions. This may be resolved by assuming the generalized RH instead. We thank Professors Gonek and Milinovich for bringing this to our attention.} for Dirichlet $L$-functions; Bui and Heath-Brown subsequently proved the same result assuming only RH. It may be of future interest to estimate mollified versions of $J_k(T)$ for $k>1$, though it seems unlikely that this will lead to significant improvements on the proportion of simple zeros.

Another connection between $J_k(T)$ and simple zeros is as follows. The Mertens function $M(x)$ is defined as
\[
M(x):=\sum_{n\le x}\mu(n),
\]
where $\mu$ is the M\"obius function. It is well known that RH is equivalent to the estimate $M(x)\ll x^{1/2+\varepsilon}$ for $\varepsilon>0$. In unpublished work, Gonek proved that RH and the conjectured upper bound $J_{-1}(T)\ll T$ from \eqref{GoHe conjecture} above imply that $M(x)\ll x^{1/2}(\log x)^{3/2}$, which was later shown by Ng~\cite{Ng} as well. Note that the bound $J_{-1}(T)\ll T$ automatically assumes that all zeros are simple, as otherwise $J_{-1}(T)=\infty$ for all sufficiently large $T$. In fact, under these hypotheses, Ng shows that $e^{-y/2}M(e^y)$ has a limiting distribution, with $0\le y\le Y$, as $Y\to \infty$. This, along with some additional assumptions which include an upper bound on $J_{-1/2}(T)$, leads Ng to re-establish the unpublished conjecture of Gonek that
\[
\underline{\overline{\lim}}_{x\to \infty}\frac{M(x)}{\sqrt{x}(\log\log\log x)^{5/4}}=\pm B
\]
for some positive constant $B$. Thus the study of $J_k(T)$ for $k$ small and negative may lead to further insight into the distribution and behavior of $M(x)$. 


\section{The idea behind the proof of Theorem~\ref{Thm 1}}

Let
\[
I_{k}(T)=\frac{1}{T}\int_0^T |\zeta(1/2+it)|^{2k} dt.
\]
In 2009  Soundararajan~\cite{So} showed that, on RH,
$I_{k}(T)\ll T(\log T)^{k^2+\epsilon}$ for every $\epsilon >0$. A few years later, Harper~\cite{Ha} devised a method  to prove, again on RH,  that  $I_{k}(T)\ll T(\log T)^{k^2}$, which is the actual conjectured size; moreover, it is the same size (in the $T$ aspect) of the unconditional lower bound proved by Radziwi{\l\l} and Soundararajan~\cite{RaSo}. Our proof of Theorem~\ref{Thm 1}   is  based on  Harper's method, and improves upon Milinovich's upper bound \eqref{Mi} in the same way that Harper's improves upon  Soundararajan's. We note that our implied constant is of the same form as that of Harper.

Harper's method relies on two ingredients. The first  is an upper bound for $\log|\zeta(\frac12+it)|$ in terms of a Dirichlet polynomial. The second is an estimate for integrals of the form
\be\label{harperintegral1}
\int_T^{2T}\cos(t\log p_1)\cdots \cos(t\log p_m)dt
\ee
for (not necessarily distinct) prime numbers $p_1,\ldots, p_m$. This follows easily from  the basic orthogonality estimate
\be\label{harperintegral2}
\int_T^{2T}e^{irt}dt=T\delta_0(r)+O(r^{-1}),
\ee
where $\delta_0$ is a Dirac mass at $0$.

Henceforth we write $\gamma$ in place of $\im(\rho)$, so that $\rho=\frac12+i\gamma$ assuming RH.
To estimate our discrete moments $J_k(T)$, our first ingredient is an upper bound for $\log |\zeta'(\rho)|$; our second, analogous to \eqref{harperintegral1}, is an estimate for sums of the form
\[
\sum_{0<\gamma\le T}\cos(\gamma\log p_1)\cdots \cos(\gamma\log p_m).
\]
The discrete analogue of \eqref{harperintegral2} is given by Gonek's~\cite{Go93} uniform version of Landau's formula. On RH, this says roughly that
\[
\sum_{0<\gamma\le T}e^{ir\gamma}=N(T)\delta_0(r)-Tf(r)+\text{ small error},
\]
where $f$ is a certain nonnegative function. Note that, in comparison with \eqref{harperintegral2}, this has a secondary term. The final contribution to $J_k(T)$ from this secondary term  is possibly of the same order as that of the first term, namely $N(T)(\log T)^{k(k+2)}$. However,   the  secondary term  contribution is not positive, so we may ignore it and still obtain an upper bound. 


The expectation that $J_k(T)\approx N(T)(\log T)^{k(k+2)}$ can be explained in a few different ways; here we discuss two heuristics. On average (in the sense of mean-square), $|\zeta'(\frac12+it)|$ is roughly $\log T$ larger than $|\zeta(\frac12+it)|$ in the interval $[0,T]$. Ford and Zaharescu~\cite{FoZa} showed that the $\gamma$ are equidistributed (mod $1$), so it seems reasonable to expect that the sum over $\gamma$ is approximated well by the corresponding integral, i.e.
\begin{align*}
	J_k(T) &\approx \frac{1}{T}\int_0^T|\zeta'(\tfrac12+it)|^{2k}dt\\
	&\approx (\log T)^{2k}I_k(T).
\end{align*}
A second heuristic relies on the expected Gaussian behavior of $\log|\zeta'(\rho)|$. Assuming RH and that the zeros of $\zeta(s)$ do not cluster together often in a particular sense, Hejhal~\cite{He} proved a central limit theorem for $\log|\zeta'(\rho)|$. Roughly speaking, he showed that the values of
$\log |\zeta'(\rho)|$, $0<\gamma\le T$, tend to be distributed like those of a Gaussian with mean $\log\log T$ and variance $\frac12 \log\log T$ as $T$ gets large, i.e. (see Theorem 4 of \cite{He})
\[
\lim_{T\to \infty}\frac{1}{N(T)}\#\left\{0<\gamma\le T:\frac{\log|\zeta'(\rho)|-\log\log T}{\sqrt{\frac12 \log\log T}}\in(a,b)\right\}=\frac{1}{\sqrt{2\pi}}\int_a^be^{-x^2/2}dx.
\]
If we assume that this central limit behavior holds uniformly in $T$, then this suggests that
\begin{align*}
J_k(T)&=\frac{1}{N(T)}\sum_{0<\gamma\le T}e^{2k\log |\zeta'(\rho)|}\\
&\approx \frac{1}{\sqrt{\pi\log\log T}}\int_{-\infty}^{\infty}e^{2kv}e^{-\frac{(v-\log\log T)^2}{\log\log T}}dv.
\end{align*}
After centering the integrand via the substitution $v\mapsto v+\log\log T$, this becomes
\[
\frac{(\log T)^{2k}}{\sqrt{\pi\log\log T}}\int_{-\infty}^{\infty}e^{2kv-\frac{v^2}{\log\log T}}dv.
\]
Completing the square then leads us to conclude that
\[
J_k(T)\approx (\log T)^{2k}(\log T)^{k^2},
\]
which matches the expected size. This should be compared with the estimates for $I_k(T)$ mentioned above, as Selberg's~\cite{Se,Se1} central limit theorem says that $\log |\zeta(\frac12+it)|$ tends to be distributed like a Gaussian with mean $0$ and variance $\frac12\log\log T$. Both heuristics suggest that the factor of $(\log T)^{2k}$ for discrete moments can be attributed to this difference in mean when compared with $I_k(T)$. 

\section{An upper bound for $\log|\zeta'(\rho)|$}
\noindent Throughout the paper, we denote prime numbers with the letters $p$ or minor variations such as $\tilde{p}$. When we write $p^h$, it is to be understood that $h$ is a natural number. The von Mangoldt function $\Lambda(n)$ is defined as
\[
\Lambda(n)=
\begin{cases}
\log p &\hbox{if }n=p^h,\\
0 &\hbox{otherwise}.
\end{cases}
\]
We extend the von Mangoldt function to the rest of $\R$ by taking $\Lambda(x)=0$ if $x$ is not a natural number; this will be useful in Lemma~\ref{Lem-Landau} below.
We also define a slight variant of $\Lambda(n)$. 
Let $\cL =\log T$. We
set
\[
\Lambda_{\cL}(n)=
\begin{cases}
\Lambda(n)&\hbox{if }n=p\; \text{or}\;  p^2 \;\text{and} \; n\le \cL,\\
0&\hbox{otherwise}.
\end{cases}
\]
We now prove the following upper bound for $\log|\zeta'(\rho)|$. This result is similar to upper bounds for $\log|\zeta(\frac12+it)|$ due to Soundararajan~\cite{So} and Harper~\cite{Ha}, and our proof is a modification of their arguments.

\begin{proposition}\label{Prop 1}
	Assume RH. Let $T$ be large and let $2\le x\le T^2$. Set $\sigma_x=\frac12+\frac{1}{\log x}$. If $\rho=\frac12+i\gamma$ is a zero of $\zeta(s)$ with $T< \gamma\le 2T$, then
	\[
	\log|\zeta'(\rho)|\le \re \sum_{n\le x}\frac{\Lambda_{\cL}(n)}{n^{\sigma_x+i\gamma}\log n}\frac{\log (x/n)}{\log x}+\log\log T+\frac{\log T}{\log x}+O(1).
	\]
\end{proposition}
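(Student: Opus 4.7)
Plan. I would adapt Harper's method~\cite{Ha} for bounding $\log|\zeta(\tfrac12+it)|$ to the discrete setting, using ingredients tailored to the zero at $\rho$.

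The first step is a reduction via the maximum modulus principle. Since $\rho$ is a zero of $\zeta$, the function $g(s)=\zeta(s)/(s-\rho)$ is holomorphic on the disc $|s-\rho|\le 1/\log T$, with $g(\rho)=\zeta'(\rho)$. Applying the maximum modulus principle to $|g|$ on this disc, and using that $|s-\rho|=1/\log T$ on the boundary, gives
\[
\log|\zeta'(\rho)| = \log|g(\rho)| \le \max_{|s-\rho|=1/\log T}\log|g(s)| = \log\log T + \max_{|s-\rho|=1/\log T}\log|\zeta(s)|.
\]
This produces the $\log\log T$ term in the final bound.

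The second step bounds $\log|\zeta(s)|$ on this circle in terms of a Dirichlet polynomial at $\sigma_x+i\gamma$. For $s$ with $\re(s)\ge \tfrac12$, I would invoke a Soundararajan--Harper type bound of the form
\[
\log|\zeta(s)| \le \re\sum_{n\le x}\frac{\Lambda_{\cL}(n)}{n^s\log n}\frac{\log(x/n)}{\log x}+\frac{\log T}{\log x}+O(1),
\]
obtained from a smoothed Perron expansion of $\log\zeta$: under RH the contribution of nontrivial zeros of $\zeta$ in the contour shift is $O(\log T/\log x)$, and Harper's device permits restricting the prime sum to $\Lambda_{\cL}$. For $s$ with $\re(s)<\tfrac12$, the functional equation yields $\log|\zeta(s)|\le \log|\chi(s)|+\log|\zeta(1-\bar s)|$; here $\log|\chi(s)|=O(1)$ because $|\re(s)-\tfrac12|\le 1/\log T$, and $\re(1-\bar s)\ge\tfrac12$ so the previous estimate applies at $1-\bar s$. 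Since $\Lambda_{\cL}$ is supported on $n\le \cL$ and the resulting evaluation point lies within distance $O(1/\log x+1/\log T)$ of $\sigma_x+i\gamma$, a Lipschitz comparison bounds the discrepancy between the Dirichlet polynomial there and the one at $\sigma_x+i\gamma$ by $O(\sqrt{\log T}/\log x+1)$, which is absorbed into the error.

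The main obstacle will be the Soundararajan--Harper bound with the prime sum restricted to $\Lambda_{\cL}$: a naive pointwise bound on the contribution of primes $\cL<p\le x$ in the full Soundararajan sum is too large, so the truncation must be built into the derivation itself via careful manipulation of the Perron integrand, mirroring Harper's key innovation in~\cite{Ha}.
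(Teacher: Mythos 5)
Your maximum-modulus reduction is a clean idea: writing $g(s)=\zeta(s)/(s-\rho)$, observing $g(\rho)=\zeta'(\rho)$, and bounding $|g(\rho)|$ by its maximum on $|s-\rho|=1/\log T$ does correctly produce the $\log\log T$ term, and is a genuinely different entry point from the paper's. The paper instead works exclusively at the fixed imaginary part $\gamma$: it integrates $-\re\frac{\zeta'}{\zeta}(\sigma+it)$ from $\sigma=\frac12$ to $\sigma_x$, isolates the $\rho$-term, and takes $t\to\gamma$ to express $\log|\zeta'(\rho)|-\log|\zeta(\sigma_x+i\gamma)|$ (its $\log\log x$ comes from $-\log(\sigma_x-\tfrac12)$). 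It then bounds $\log|\zeta(\sigma_x+i\gamma)|$ via Soundararajan's explicit formula at the \emph{same} point $\sigma_x+i\gamma$, and the crucial observation is that after combining the two steps, the total coefficient of the nonnegative zero-density sum $\tilde F_x(\rho)$ is negative, so that term may be discarded.

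The gap in your argument is the Lipschitz comparison. You state that ``$\Lambda_{\cL}$ is supported on $n\le\cL$,'' but this is a misreading of the paper's (admittedly ambiguously typeset) definition: the cutoff $\le\cL$ applies only to the prime-square terms, while $\Lambda_{\cL}(p)=\log p$ for \emph{all} primes $p\le x$. Indeed the proof of Proposition~\ref{Prop 1} explicitly discards only ``$n=p^2$ for $\log T<p\le\sqrt x$'' and ``$p^m$, $m\ge 3$'', keeping the full prime sum up to $x$; and the later decomposition $G_{i,j}=\re\sum_{n\in I_i}w_j(n)n^{-1/2-it}$ with $I_i=(T^{\beta_{i-1}},T^{\beta_i}]$ would be vacuous for $i\ge 2$ under your reading. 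Consequently the Dirichlet polynomial you are comparing has $s$-derivative
\[
\frac{d}{ds}\,\re\sum_{n\le x}\frac{\Lambda_{\cL}(n)}{n^{s}\log n}\frac{\log(x/n)}{\log x}
\ \asymp\ \sum_{p\le x}\frac{\log p}{p^{\re s}}\frac{\log(x/p)}{\log x}
\ \gg\ \frac{\sqrt x}{\log x}
\]
for $\re s\approx\tfrac12$, so moving the evaluation point a distance $\sim 1/\log T$ (or $\sim 1/\log x$) costs an error of order $\sqrt x/(\log x\,\log T)$, which is a positive power of $T$ when $x=T^{c}$, not the $O(\sqrt{\log T}/\log x+1)$ you assert. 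This is precisely why the paper must keep the imaginary part fixed at $\gamma$ throughout and cannot afford to move to a nearby point on a circle: the Dirichlet polynomial oscillates far too rapidly on scale $1/\log T$. Your reduction via $g(s)$ therefore needs to be replaced by an argument (like the paper's limit $t\to\gamma$ on the vertical line) that never shifts the imaginary part, or else you must average rather than maximize over the circle and then confront the same zero-density sums the paper handles through sign considerations.
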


\begin{proof}
	The inequality is true if $\zeta'(\rho)=0$, since the left-hand side is $-\infty$ and the right-hand side is finite. Thus we may assume that $\rho$ is a simple zero. We begin with the estimate
	\be\label{logderivative}
	-\re\frac{\zeta'}{\zeta}(\sigma+it)=\tfrac12\log T-\sum_{\tilde{\rho}=\frac12+i\gamma}\frac{\sigma-1/2}{(\sigma-1/2)^2+(t-\tilde{\gamma})^2}+O(1).
	\ee
	This follows from the Hadamard product formula for $\zeta(s)$ along with Stirling's approximation (see (4) in \cite{So}), and it is valid for $T\le t\le 2T$ as long as $t$ is not the ordinate of a zero of the zeta-function. Integrating $\sigma$ from $\frac12$ to $\sigma_x$ in \eqref{logderivative}, we have	
	\begin{multline}\label{prop1}
	\log|\zeta(\tfrac12+it)|-\log|\zeta(\sigma_x+it)|\\
	=(\sigma_x-\tfrac12)\big(\tfrac12\log T+O(1)\big)
	-\tfrac12\sum_{\tilde{\rho}=\frac12+i\tilde{\gamma}}\log \frac{(\sigma_x-\tfrac12)^2+(t-\tilde{\gamma})^2}{(t-\tilde{\gamma})^2}.
	\end{multline}
	Isolating the term corresponding to $\rho$ from the sum over zeros and subtracting $\log |t-\gamma|$ from both sides of \eqref{prop1}, we find that
	\begin{multline*}
	\log\bigg|\frac{\zeta(\tfrac12+it)}{t-\gamma}\bigg|-\log|\zeta(\sigma_x+it)|\\
	=(\sigma_x-\tfrac12)\big(\tfrac12\log T+O(1)\big)-\log|(\sigma_x-\tfrac12)+i(t-\gamma)|-\tfrac12\sum_{\tilde{\rho}\neq \rho}\log\frac{(\sigma_x-\tfrac12)^2+(t-\tilde{\gamma})^2}{(t-\tilde{\gamma})^2}.
	\end{multline*}
	Since $\rho$ is a simple zero, we may take the limit as  $t\to \gamma$ to obtain
	\begin{multline}\label{prop2}
	\log|\zeta'(\rho)|-\log|\zeta(\sigma_x+i\gamma)|\\
	=(\sigma_x-\tfrac12)\big(\tfrac12\log T+O(1)\big)
	-\log\big|\sigma_x-\tfrac12\big|-\tfrac12\sum_{\tilde{\rho}\neq \rho}\log \frac{(\sigma_x-\tfrac12)^2+(\gamma-\tilde{\gamma})^2}{(\gamma-\tilde{\gamma})^2}.
	\end{multline}
	Now define
	\be\label{F}
	\tilde{F}_x(\rho)=\sum_{\tilde{\rho}\neq \rho}\frac{\sigma_x-\tfrac12}{(\sigma_x-\tfrac12)^2+(\gamma-\tilde{\gamma})^2}.
	\ee
	Observe that this  sum is positive as $\sigma_x=\tfrac12+\frac{1}{\log x}$. Since $\log(1+x)\geq x/(1+x)$ for $x>0$, it follows from \eqref{prop2} that
	\be\label{prop3}
	\log|\zeta'(\rho)|\le \log |\zeta(\sigma_x+i\gamma)|+\log\log x-\tfrac12(\sigma_x-\tfrac12)\tilde{F}(\rho)+\frac12\frac{\log T}{\log x}+O(1).
	\ee
	Now we recall Lemma 1 of \cite{So}, which says that
	\[
	\frac{\zeta'}{\zeta}(s)\log x=-\sum_{n\le x}\frac{\Lambda(n)}{n^s}\log(x/n)-\bigg(\frac{\zeta'}{\zeta}(s)\bigg)'+\frac{x^{1-s}}{(1-s)^2}-\sum_{\tilde{\rho}}\frac{x^{\tilde{\rho}-s}}{(\tilde{\rho}-s)^2}-\sum_{k=1}^{\infty}\frac{x^{-2k-s}}{(2k+s)^2}
	\]
	for $x\ge 2$ and any $s$ not coinciding with $1$ or a zero of $\zeta(s)$. 
	The third term on the right-hand side and the last sum here are both $\ll x^{1-\sigma}/T^2$. Thus, after dividing by $\log x$, integrating $\sigma$ from $\infty$ to $\sigma_x$ and taking real parts of the resulting expressions, it follows that
	\be\label{log}
	\log |\zeta(s_x)|=\re\sum_{n\le x}\frac{\Lambda(n)}{n^s\log n}\frac{\log (x/n)}{\log x}-\frac{1}{\log x}\re \frac{\zeta'}{\zeta}(s_x)+\frac{1}{\log x}\re\sum_{\tilde{\rho}}\int_{\sigma_x}^{\infty}\frac{x^{\tilde{\rho}-s}}{(\tilde{\rho}-s)^2}d\sigma+O(1),
	\ee
	where $s_x=\sigma_x+i\gamma$. Recalling the definition of $\tilde{F}_x(\rho)$ from \eqref{F} above, we estimate the sum over $\tilde{\rho}\neq \rho$ in \eqref{log} as
	\begin{align*}
	\bigg|\re\sum_{\tilde{\rho}\neq \rho}\int_{\sigma_x}^{\infty}\frac{x^{\tilde{\rho}-s}}{(\tilde{\rho}-s)^2}d\sigma\bigg| &\le \frac{1}{\log x}\sum_{\tilde{\rho}\neq \rho}\frac{x^{\frac12-\sigma_x}}{|(\sigma_x-\frac12)+i(\gamma-\tilde{\gamma})|^2}\\
	&=\frac{x^{\frac12-\sigma_x}}{(\sigma_x-\frac12)\log x}\tilde{F}_x(\rho).
	\end{align*}
	Also, we may use \eqref{logderivative} to see that
	\[
	-\re\frac{\zeta'}{\zeta}(s_x)=\tfrac12\log T-\tilde{F}_x(\rho)-\frac{1}{\sigma_x-\frac12}.
	\]
	Applying both of these estimates to the right-hand side of \eqref{log}, we obtain	
	\begin{multline}\label{prop4}
	\log|\zeta(\sigma_x+i\gamma)|\le \re \sum_{n\le x}\frac{\Lambda(n)}{n^{\sigma_x+i\gamma}\log n}\frac{\log(x/n)}{\log x}-\frac{\tilde{F}_x(\rho)}{\log x}+\frac{1}{\log x}\int_{\sigma_x}^{\infty}\frac{x^{\frac12-\sigma}}{(\sigma-\frac12)^2}d\sigma\\
	-\frac{1}{(\sigma_x-\tfrac12)\log x}+\frac{x^{\frac12-\sigma_x}\tilde{F}(\rho)}{(\sigma_x-\frac12)\log^2 x}+\frac{\log T}{\log x}+O(1).
	\end{multline}
	After a change of variables, the integral in \eqref{prop4} may be expressed as
	\[
	\log x\int_1^{\infty}\frac{e^{-u}}{u^2}du.
	\]
	Hence the last term on the first line in \eqref{prop4} is a constant. Using \eqref{prop4} to estimate $\log|\zeta(s_x)|$ in \eqref{prop3} and recalling that $\sigma_x=\frac12+\frac{1}{\log x}$ and $x\le T^2$, we obtain
	\[
	\log|\zeta'(\rho)|\le \re \sum_{n\le x}\frac{\Lambda(n)}{n^{\sigma_x+i\gamma}\log n}\frac{\log(x/n)}{\log x}+\frac{\tilde{F}(\rho)}{\log x}(e^{-1}-\tfrac32)+\log\log T + \frac{\log T}{\log x}+O(1).
	\]
	Since $\tilde{F}(\rho)>0$ and $e^{-1}-\frac32<0$, we may omit the second term on the right-hand side here and still have an upper bound for the left-hand side. That is, we have
	\be\label{prop5}
	\log|\zeta'(\rho)|\le \re\sum_{n\le x}\frac{\Lambda(n)}{n^{\sigma_x+i\gamma}\log n}\frac{\log (x/n)}{\log x}+\log\log T+\frac{\log T}{\log x}+O(1).
	\ee
	The sum in \eqref{prop5} is supported on prime powers, and the prime powers $n=p^m$ with $m\geq 3$ contribute $O(1)$. Furthermore, as noted by Harper~\cite{Ha}, the sum over $n=p^2$ for $\log T<p\le \sqrt{x}$ is also bounded. Consequently, we conclude that
	\[
	\log|\zeta'(\rho)|\le \re\sum_{n\le x}\frac{\Lambda_{\cL}(n)}{n^{\sigma_x+i\gamma}\log n}\frac{\log (x/n)}{\log x}+\log\log T+\frac{\log T}{\log x}+O(1).
	\]
	This completes the proof of Proposition~\ref{Prop 1}.
\end{proof}

\section{Notation and Setup}
\noindent Let $N(T,2T)$ denote the number of $\rho=\frac12+i\gamma$ with $T<\gamma\le 2T$, i.e. $$N(T,2T)=N(2T)-N(T).$$
Our approach is to prove the following upper bound for discrete moments on dyadic intervals.
\begin{proposition}\label{Prop 2}
	Assume RH. Let $k>0$. Then
	\[\frac{1}{N(T,2T)}\sum_{T<\gamma\le 2T}|\zeta'(\rho)|^{2k}\ll_k(\log T)^{k(k+2)}\]
	as $T\to \infty$.
\end{proposition}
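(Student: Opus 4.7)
The plan is to mimic Harper's argument for $I_k(T)$, with the discrete Landau-type formula of Gonek replacing the orthogonality relation \eqref{harperintegral2}. First, apply Proposition~\ref{Prop 1} with $x$ chosen so that $\log T/\log x$ is a bounded constant (roughly $x=T^{1/c}$). This converts $|\zeta'(\rho)|^{2k}$ into $(\log T)^{2k}\exp\!\big(2k\,\Re\,P(\gamma)\big)$, where
\[
P(\gamma)=\sum_{n\le x}\frac{\Lambda_{\cL}(n)}{n^{\sigma_x+i\gamma}\log n}\frac{\log(x/n)}{\log x}
\]
is a short Dirichlet polynomial over primes and prime squares. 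The extra factor $(\log T)^{2k}$ is exactly what distinguishes the discrete moment from the continuous one, and the remaining job is to show that the average of $e^{2k\Re P(\gamma)}$ over $\gamma\in(T,2T]$ is $\ll_k(\log T)^{k^2}$.

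Next, introduce Harper's dyadic decomposition of the prime range. Choose a rapidly growing sequence $T^{\beta_0}<T^{\beta_1}<\cdots<T^{\beta_J}=x$, with $\beta_j$ iterated exponentially (so that $\beta_{j+1}/\beta_j$ is large and $J\asymp\log\log\log T$), and write $P=P_0+P_1+\cdots+P_J$ with $P_j$ the portion of $P$ supported on primes in $(T^{\beta_{j-1}},T^{\beta_j}]$. Pick thresholds $V_j$ (growing with $j$) and call $\rho$ \emph{good} if $|P_j(\gamma)|\le V_j$ for every $j$, and \emph{bad} otherwise, classified by the smallest index $j^*$ at which the threshold is exceeded. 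On good zeros one bounds $e^{2k\Re P_j(\gamma)}$ by a Taylor polynomial of degree $\asymp K_j$ (choosing $K_j$ with $K_j V_j\le 1$, as Harper does), multiplies the resulting polynomials across $j$, and expands. On bad zeros one uses the Markov-type inequality with the high moment $|P_{j^*}(\gamma)|^{2K_{j^*}}$; the count of such zeros is small enough that, together with the trivial bound $|\zeta'(\rho)|\ll T^{1/2+\varepsilon}$ (sharpened by the large-$|P|$ hypothesis), their contribution is negligible.

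Both of the remaining sums —the moments $\sum_\gamma |P_j(\gamma)|^{2K_j}$ on bad zeros, and the expanded products $\sum_\gamma\prod_j Q_j(P_j(\gamma),\overline{P_j(\gamma)})$ on good zeros— reduce, after multiplying out, to estimating
\[
\frac{1}{N(T,2T)}\sum_{T<\gamma\le 2T}\prod_{i=1}^m p_i^{\pm i\gamma}
\]
for various tuples of primes $p_i\le x$. This is where Gonek's uniform Landau formula enters: it yields a diagonal main term $\delta(\prod p_i^{\pm1}=1)$ matching the continuous orthogonality used by Harper, plus a secondary term that is nonpositive in the relevant configuration (or can be bounded in absolute value and absorbed), plus a genuine error term $\ll T^{-\delta}$ provided the product of primes stays suitably below $T$. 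Choosing $x$ so that the largest prime product arising in the expansion stays below $T^{\theta}$ for some $\theta<1$ (this is why the sequence $\beta_j$ must saturate at $\beta_J=1/\text{(something)}$ rather than $\beta_J=1$; one takes $K_j\beta_j$ bounded by a small constant, exactly the calibration in Harper's paper) makes the error admissible.

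The main obstacle is this last calibration: arranging the parameters $\beta_j$, $V_j$, and $K_j$ so that (i) the Taylor truncation error on good zeros is under control, (ii) the bad-zero count shrinks quickly enough to beat the pointwise bound on $|\zeta'(\rho)|$ even in the outermost shell $j^*=J$, and (iii) the total length of the prime products appearing after expansion never exceeds $T^{1-\eta}$, so that Gonek's formula is applicable with a nontrivial saving. Once these are balanced in the manner of Harper, the diagonal terms collapse to the Gaussian-type sum that reproduces $(\log T)^{k^2}$ after completing the square over $j$, and combining with the factor $(\log T)^{2k}$ from Proposition~\ref{Prop 1} yields the stated bound $(\log T)^{k(k+2)}$.
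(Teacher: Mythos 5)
Your proposal follows the same overall strategy as the paper: apply Proposition~\ref{Prop 1} to bound $\log|\zeta'(\rho)|$ by a short Dirichlet polynomial plus $\log\log T$ plus a bounded quantity, split that polynomial over Harper's shells $I_i=(T^{\beta_{i-1}},T^{\beta_i}]$, sort the ordinates $\gamma$ according to the smallest shell at which the truncated polynomial exceeds its threshold, bound the exponential by short Taylor polynomials on the good zeros and by Markov on the bad ones, and finally replace the discrete sums over zeros by expectations of a random multiplicative model via Gonek's uniform Landau formula. You also correctly identify the decisive structural point (already flagged in \S 2): the secondary term in Landau's formula carries a fixed sign and can simply be discarded for an upper bound. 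Your parenthetical remark that it could ``alternatively be bounded in absolute value and absorbed'' is not an alternative at all; as \S 2 notes, that term can be of the same order $N(T)(\log T)^{k(k+2)}$ as the main term, so the sign is essential.

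There is, however, a genuine gap in your treatment of the worst-case bad zeros, the set $S(0)$ of $\gamma$ for which the first shell already exceeds its threshold. You propose to combine the smallness of $\#S(0)$ with the pointwise bound $|\zeta'(\rho)|\ll T^{1/2+\varepsilon}$. This cannot work: the high-moment Markov bound gives only $\#S(0)\ll N(T,2T)\,e^{-(\log\log T)^2/10}$, a saving that is super-logarithmic but far from polynomial in $T$, while the pointwise bound costs a factor $T^{k(1+2\varepsilon)}$; even the sharper RH-conditional bound $|\zeta'(\rho)|\ll\exp(C\log T/\log\log T)$ grows like $T^{o(1)}$ and still overwhelms $e^{-(\log\log T)^2/10}$. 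Nor does the ``large-$|P|$ hypothesis'' help: on $S(0)$ there are no good shells available, and the first shell being large only makes the Proposition~\ref{Prop 1} bound on $\log|\zeta'(\rho)|$ worse, not better. The paper instead avoids any pointwise bound by applying H\"older's inequality,
\[
\sum_{\gamma\in S(0)}|\zeta'(\rho)|^{2k}\le \bigl(\#S(0)\bigr)^{1-1/p}\Bigl(\sum_{T<\gamma\le 2T}|\zeta'(\rho)|^{2[2k+1]}\Bigr)^{1/p},\qquad p=[2k+1]/k,
\]
and then invoking Milinovich's already-established bound \eqref{Mi} (with $\varepsilon=1$) for the second factor. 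Since that factor is only polynomial in $\log T$, the $e^{-(\log\log T)^2/10}$ saving from $\#S(0)$ defeats it comfortably, and the $S(0)$ contribution is $\ll N(T,2T)$. This use of an intermediate, weaker bound on the full moment is precisely the analogue of how Harper uses Soundararajan's $(\log T)^{k^2+\varepsilon}$ bound in the continuous case; without some such input the $S(0)$ contribution cannot be controlled.
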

\noindent Theorem~\ref{Thm 1} follows from Proposition~\ref{Prop 2}. To see this, first divide the interval $(0,T]$ into dyadic subintervals $(2^{-i}T,2^{1-i}T]$ for $i\geq 1$. Second, note that
\[N(2^{-i}T,2^{1-i}T]\asymp 2^{1-i}N(T);\]
this follows from the Riemann-von Mangoldt formula (see Ch. 15 of \cite{Da}), which says
\[N(T)=\tfrac{T}{2\pi}\log \tfrac{T}{2\pi e}+O(\log T).\]
Applying Proposition~\ref{Prop 2} to each subinterval and summing over $i$ yields the conclusion of Theorem~\ref{Thm 1}.

In order to prove Proposition~\ref{Prop 2}, we begin by defining an increasing geometric sequence $\{\beta_i\}$ of real numbers by
\[
\beta_i=
\begin{cases}
\frac{20^{i-1}}{(\log\log T)^2}&\hbox{if }i\geq 1,\\
\hfil 0 &\hbox{if }i=0.
\end{cases}
\]
We will not need all $i\geq 0$, and we take the upper threshold of the index as
\[
\cI:=\max\{i:\beta_i\le e^{-1000k}\}.
\]
We split $(0,T^{\beta_{\cI}}]$ into disjoint subintervals $I_i=(T^{\beta_{i-1}},T^{\beta_i}]$ for $1\le i\le \cI$ and define $$w_j(n)=\frac{\Lambda_{\cL}(n)}{n^{1/\beta_j\log T}\log n}\frac{\log (T^{\beta_j}/n)}{\log T^{\beta_j}}$$
for $1\le j\le \cI$. Setting
\[
G_{i,j}(t)=\re\sum_{n\in I_i}\frac{w_j(n)}{\sqrt{n}}n^{-it}
\]
for $1\le i\le j\le \cI$, the conclusion of Proposition~\ref{Prop 1} can be written
\be\label{newinequality}
\log |\zeta'(\rho)|\le \re\sum_{i=1}^jG_{i,j}(\gamma)+\log\log T+\beta_j^{-1}+O(1).
\ee
We also need a particular random model for $G_{i,j}(\gamma)$. Let $\{X_p\}$ be a sequence of independent random variables indexed by the primes, where each $X_p$ is uniformly distributed on the unit circle in the complex plane. If $n$ has prime factorization $n=p_1^{h_1}\cdots p_r^{h_r}$, then we define
\[
X_n=X_{p_1}^{h_1}\cdots X_{p_r}^{h_r}.
\]
Thus $X_n$ is a random completely multiplicative function. We then define the random model $G_{i,j}(X)$ as
\[
G_{i,j}(X)=\re \sum_{n\in I_i}\frac{w_j(n)}{\sqrt{n}}X_n.
\]

Next we sort the $\gamma$ in the interval $[T,2T]$ into subsets based on the size of $G_{i,j}(\gamma)$. 
First let
\be\label{defT}
\cT=\{T<\gamma\le 2T:|G_{i,\cI}(\gamma)|\le \beta_i^{-3/2}\text{ for } 1\le i\le \cI\}.
\ee
This can be thought of as the \emph{best} set of $\gamma$, those for which $\exp 2k\re G_{i,\cI}(\gamma)$ can be approximated well by a short truncation of its Maclaurin series for every $1\le i\le \cI$ (see Lemma~\ref{Lem-Taylor} below). Similarly we define 
\begin{multline}\label{defS(j)}
S(j)=\{T<\gamma\le T:|G_{i,\ell}(\gamma)|\le \beta_i^{-3/4}\text{ for }1\le i\le j\text{ and }i\le \ell\le \cI,\\
\text{ but }|G_{j+1,\ell}(\gamma)|>\beta_{j+1}^{-3/4}\text{ for some }j+1\le \ell\le \cI\}
\end{multline}
for $1\le j\le \cI-1$. The remaining subset $S(0)$ is
\be\label{defS(0)}
S(0)=\{T<\gamma\le 2T:|G_{1,\ell}(\gamma)|>\beta_1^{-3/4}\text{ for some }1\le \ell \le \cI\}.
\ee
In a certain sense, the sets $S(j)$ ($1\le j<\cI$) are not as \emph{good} as $\cT$, but they are not as \emph{bad} as $S(0)$. This is evident in the fact that Lemma~\ref{Lem-Taylor} below does not say anything about $S(0)$. However, we will see in \S 6.3 that the contribution of $\gamma\in S(0)$ in Proposition~\ref{Prop 2} is negligible.

\section{Some lemmas}

\noindent The main ingredient in our proof is a uniform version of Landau's formula~\cite{La}. This was originally proved by Gonek\cite{Go93} and was studied in further detail by many others (e.g. \cite{FoSoZa}\cite{FoZa}\cite{Fu}). The version we use here is essentially the one found in \cite{Ra1}. We recall that we take $\Lambda(x)=0$ if $x$ is not an integer.

\begin{lemma}\label{Lem-Landau}
	Assume RH. Let $T$ be large. Suppose $a$ and $b$ are positive integers with $a>b$. Then
	\[
	\sum_{T<\gamma\le 2T}(a/b)^{i\gamma}=-\frac{T}{2\pi}\frac{\Lambda(a/b)}{\sqrt{a/b}}+O\big(\sqrt{ab}(\log T)^2\big).
	\]
\end{lemma}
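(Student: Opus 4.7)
The strategy is a standard contour integral argument. Under RH the zeros with $T<\gamma\le 2T$ lie on $\sigma=\tfrac12$ and $(a/b)^{i\gamma}=(a/b)^{\rho-1/2}$, while $\zeta'/\zeta$ has residue $+1$ at each simple zero, so by the residue theorem
\[
\sum_{T<\gamma\le 2T}(a/b)^{i\gamma}=\frac{1}{2\pi i}\oint_R(a/b)^{s-1/2}\frac{\zeta'}{\zeta}(s)\,ds,
\]
where I take $R$ to be the positively oriented rectangle with vertices $c+iT_1$, $c+iT_2$, $1-c+iT_2$, $1-c+iT_1$, with $c=1+1/\log T$ and $T_1,T_2$ chosen within $O(1/\log T)$ of $T,2T$ so that the horizontal edges keep distance $\gg 1/\log T$ from every zero ordinate. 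Such $T_j$ exist by the Riemann--von Mangoldt formula, and the perturbation alters the left-hand side by at most $O(\log T)$, comfortably inside the claimed error.

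The main term comes exclusively from the right vertical edge. On $\sigma=c$ the expansion $-\zeta'/\zeta(c+it)=\sum_n\Lambda(n)n^{-c-it}$ converges absolutely, and integrating termwise gives
\[
-\frac{(a/b)^{c-1/2}}{2\pi}\sum_n\frac{\Lambda(n)}{n^c}\int_{T_1}^{T_2}\left(\frac{a}{bn}\right)^{it}dt.
\]
The term $n=a/b$ (possible only when $b\mid a$) evaluates exactly to $-(T/2\pi)\Lambda(a/b)/\sqrt{a/b}$ after using $c-\tfrac12=1/\log T$ and absorbing the height perturbations. Every other term is bounded by $\Lambda(n)n^{-c}/|\log(a/bn)|$; splitting $n$ into a \emph{far} regime ($n\le a/2b$ or $n\ge 2a/b$, where $|\log(a/bn)|\gg 1$) and a \emph{near} regime ($a/2b<n<2a/b$ with $n\ne a/b$, where $|\log(a/bn)|\gg|a-bn|/(bn)\ge 1/(bn)$), the off-diagonal total works out to $O(\sqrt{ab}\log T)$.

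The two horizontal edges I would handle via the RH bound $|\zeta'/\zeta(\sigma+iT_j)|\ll(\log T)^2$, valid uniformly on $1-c\le\sigma\le c$ thanks to the chosen distance of $T_j$ from the zero ordinates. Integrating $(a/b)^{\sigma-1/2}(\log T)^2$ in $\sigma$ produces a bound of order $(a/b)^{1/2}(\log T)^2/\log(a/b)$; since $a,b$ are integers with $a>b$, the factor $1/\log(a/b)$ is at most $\asymp b$ in the delicate regime $a/b\to 1^+$, yielding $O(\sqrt{ab}(\log T)^2)$. The left edge carries no Dirichlet series, so I would invoke the functional equation for $\xi$ in the form $\zeta'/\zeta(s)=-\zeta'/\zeta(1-s)+F(s)$ with $F(s)\ll\log(|t|+2)$. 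The first piece expands into $\sum_n\Lambda(n)n^{s-1}$, producing integrals $\int(an/b)^{it}dt$ whose frequencies $\log(an/b)$ are all $\ge\log 2$ (since $an/b\ge 2a/b\ge 2$), making them trivially small; the $F$-piece is handled by integration by parts against the oscillating $(a/b)^{it}$. Both are swallowed by the claimed error.

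The main obstacle is maintaining uniformity in $a,b$: both the horizontal edge bound and the near-diagonal terms on the right edge naturally introduce factors of $1/\log(a/b)$ that blow up as $a/b\to 1^+$, and a naive treatment would produce spurious factors of $a$. Controlling them requires systematically exploiting the integrality of $a,b$ to bound $1/\log(a/b)$ by $O(b)$ (via $a-b\ge 1$, and analogously $|a-bn|\ge 1$ in the near-diagonal regime). Once this is done consistently on each side, the four pieces combine to give the stated estimate.
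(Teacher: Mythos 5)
The paper does not prove this lemma at all: it attributes the formula to Gonek~\cite{Go93} and cites Radziwi\l\l~\cite{Ra1} for the precise version used, so there is no in-text argument to compare against. Your contour-integral proof is the standard Landau--Gonek argument, and the overall decomposition (diagonal term plus off-diagonal on the right edge, $(\log T)^2$ bound for $\zeta'/\zeta$ on horizontal edges chosen off zero ordinates, functional equation plus integration by parts on the left edge) is sound. Your emphasis on exploiting the integrality of $a$, $b$, $n$ to get $|a-bn|\ge 1$ and $1/\log(a/b)\ll b$ is exactly the point that keeps the error uniform of order $\sqrt{ab}$, and is what distinguishes this discrete uniform version from cruder formulations.

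Two small remarks worth recording if you write this up. First, $(a/b)^{c-1/2}=\sqrt{a/b}\cdot(a/b)^{1/\log T}$, and the last factor is $O(1)$ only when $a/b\ll T^{O(1)}$; you should observe that the lemma is trivially true once $\sqrt{ab}(\log T)^2\gg T\log T$ (since $\big|\sum_{T<\gamma\le 2T}(a/b)^{i\gamma}\big|\le N(T,2T)\ll T\log T$ and the main term is $\ll T\log(a/b)/\sqrt{a/b}$), so you may assume $a/b$ is at most a fixed power of $T$ throughout. Second, in the near-diagonal range the quantities $|a-bn|$ form an arithmetic progression with common difference $b$, giving $\sum_{\text{near}}1/|a-bn|\ll 1+(\log a)/b$; combined with $\Lambda(n)/n^c\ll (b/a)\log(a/b)$ this yields a near-diagonal total of order $\sqrt{ab}\,\log(a/b)+\sqrt{ab}\,(\log(a/b))^2/b$, which is $O(\sqrt{ab}(\log T)^2)$ but not quite the $O(\sqrt{ab}\log T)$ you assert --- still comfortably inside the lemma's error term, so only the intermediate claim, not the conclusion, is overstated.
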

\noindent If $a<b$, then we take the complex conjugate of the left-hand side above and apply the lemma to $b/a$. This yields a main term of 
\[
-\frac{T}{2\pi}\frac{\Lambda(b/a)}{\sqrt{b/a}}
\]
on the right-hand side. The next lemma is an easy consequence of Taylor's theorem.

\begin{lemma}\label{Lem-Taylor}
	Let $k>0$ and suppose $\gamma \in \cT$. Then
	\[
	\exp \bigg(2k\sum_{i=1}^{\cI}G_{i,\cI}(\gamma)\bigg)\ll \prod_{i=1}^{\cI}\left(\sum_{n=0}^{[e^2k\beta_i^{-3/4}]}\frac{(kG_{i,\cI}(\gamma))^n}{n!}\right)^2
	\]
	as $T\to \infty$. If, instead, $\gamma\in S(j)$ for some $1\le j\le \cI-1$, then
	\[
	\exp \bigg(2k\sum_{i=1}^jG_{i,j}(\gamma)\bigg)\ll \prod_{i=1}^j\left(\sum_{n=0}^{[e^2k\beta_i^{-3/4}]}\frac{(kG_{i,j}(\gamma))^n}{n!}\right)^2.
	\]
	The implied constants are independent of $k$.
\end{lemma}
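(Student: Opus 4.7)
The plan is to prove both inequalities by a single per-factor Taylor comparison, detailed here for $\gamma\in\cT$; the $\gamma\in S(j)$ case is identical with $\cI$ replaced by $j$ throughout. The starting point is the factorisation
\[
\exp\!\Big(2k\sum_{i=1}^{\cI}G_{i,\cI}(\gamma)\Big) \;=\; \prod_{i=1}^{\cI}\big(\exp(kG_{i,\cI}(\gamma))\big)^{2},
\]
which reduces the whole lemma to a per-factor bound of the form
\[
\exp(kG_{i,\cI}(\gamma)) \;\le\; (1+\delta_i)\sum_{n=0}^{N_i}\frac{(kG_{i,\cI}(\gamma))^n}{n!},\qquad N_i:=[e^{2}k\beta_i^{-3/4}],
\]
where the errors $\delta_i\ge 0$ are so small that $\prod_i(1-\delta_i)^{-2}$ is an absolute constant. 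The reason the squared shape of the right-hand side is convenient is that, on raising this per-factor comparison to the second power and multiplying in $i$, the squares reassemble into the product in the statement.

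To establish the per-factor bound, I fix $i$ and set $y:=kG_{i,\cI}(\gamma)$; this is a real number because $G_{i,\cI}$ is, by definition, the real part of a Dirichlet polynomial. The defining inequality for $\cT$ gives $|y|\le k\beta_i^{-3/4}$, so in particular $|y|\le N_i/e^{2}$. Writing $e^{y}=P_{N_i}(y)+R_{N_i}(y)$ for the Taylor partial sum and remainder and invoking Stirling's bound $n!\ge(n/e)^n$ term by term, I obtain
\[
|R_{N_i}(y)|\;\le\;\sum_{n>N_i}\Big(\frac{e|y|}{n}\Big)^{n}\;\le\;\sum_{n>N_i}e^{-n}\;\ll\;e^{-N_i}.
\]
Combining this with the trivial bound $e^{-y}\le e^{|y|}\le e^{N_i/e^{2}}$ yields $e^{-y}|R_{N_i}(y)|\ll e^{-(1-e^{-2})N_i}=:\delta_i$, and hence $P_{N_i}(y)\ge(1-\delta_i)e^{y}$. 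Squaring and multiplying over $i$ then delivers the claimed product bound modulo verifying that the factors $(1-\delta_i)^{-2}$ compound to an absolute constant.

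That last verification is the one place uniformity must be checked carefully, and it is the main (though mild) obstacle. The cutoff $\beta_i\le e^{-1000k}$ built into the definition of $\cI$ forces $N_i\gg ke^{750k}$ for every $i\le\cI$, so $\delta_i$ is exponentially small in a quantity that is itself much larger than $\cI$; consequently $\sum_i\delta_i$ is bounded by an absolute constant uniformly in $k$ and $T$, and hence so is $\prod_i(1-\delta_i)^{-2}$. The $S(j)$ half of the lemma proceeds by exactly the same three steps: specialising $\ell=j$ in the definition of $S(j)$ supplies $|G_{i,j}(\gamma)|\le\beta_i^{-3/4}$ for $1\le i\le j$, which is precisely the input needed for the Taylor comparison with $\cI$ replaced by $j$ throughout. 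Keeping the implied constants absolute in $k$ is therefore the only substantive issue, and it is exactly the design purpose of the threshold $\beta_i\le e^{-1000k}$.
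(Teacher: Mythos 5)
Your overall strategy --- factoring the exponential, comparing each factor with a truncated Taylor polynomial, and controlling the resulting multiplicative error --- is exactly the approach the paper takes, and your remainder estimate via $n! \ge (n/e)^n$ is the same core ingredient. However, there is a genuine gap in the one place you flag as needing care: you do not correctly handle indices $i$ for which the truncation length $N_i = [e^2 k \beta_i^{-3/4}]$ equals zero, and this case does occur.

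Your assertion that ``the cutoff $\beta_i \le e^{-1000k}$ forces $N_i \gg k e^{750k}$ for every $i \le \cI$'' does not rule out $N_i = 0$: that lower bound is of size $k e^{750k}$, which is less than $1$ for all $k$ below roughly $0.005$, and in that regime $N_i = 0$ can hold for a range of indices $i$ near $\cI$ (in fact for $\asymp \log(1/k)$ of them as $k \to 0$, since $N_i = 0 \iff \beta_i > (e^2k)^{4/3}$). When $N_i = 0$ your $\delta_i = e^{-(1-e^{-2})N_i}$ equals $1$, so the per-factor bound $P_{N_i}(y) \ge (1-\delta_i)e^y$ degenerates to $1 \ge 0$ and gives nothing; and even a sharper evaluation of the error for these indices ($\lesssim 0.67$ per factor, not truly small) would still force you to multiply a constant $>1$ over $\asymp \log(1/k)$ indices, producing a $k$-dependent constant that blows up as $k\to 0$, contradicting the lemma's claim that the implied constant is independent of $k$. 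The paper handles precisely this case by a separate argument: letting $i_0$ be the first index with $e^2 k \beta_{i_0}^{-3/4} < 1$, it bounds the tail trivially via $\exp\bigl(2k\sum_{i\ge i_0}G_{i,\cI}(\gamma)\bigr) \le \exp\bigl(2k\sum_{i\ge i_0}\beta_i^{-3/4}\bigr) \le e$, using the geometric decay of $\beta_i^{-3/4}$ and the threshold $\beta_{i_0}^{-3/4} < 1/(e^2 k)$, and applies the Taylor comparison only for $i < i_0$. You would need to incorporate an analogous case split (or, equivalently, prove the sharper estimate that the per-factor error is $\asymp k\beta_i^{-3/4}$ for these $i$, which decays geometrically). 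One minor additional slip: the claim $|y| \le N_i/e^2$ is not generally true; what holds is $|y| < (N_i+1)/e^2$, though this does not affect your remainder estimate, which only requires $N_i + 1 > e^2|y|$.
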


\begin{proof}
	We prove the first statement, as the second follows from a similar proof. Recall from \eqref{defT} that $\gamma\in \cT$ means $|G_{i,j}(\gamma)|\le \beta_i^{-3/4}$ for all $i\le \cI$. First suppose $e^2k\beta_{i_0}^{-3/4}<1$ for some $i_0\le \cI$. Then $[e^2k\beta_i^{-3/4}]=0$ for all $i\ge i_0$. In this case, we use the trivial estimate
	\[
	\exp\left(2k\sum_{i=i_0}^{\cI}G_{i,\cI}(\gamma)\right)\le \exp\left(2k\sum_{i=i_0}^{\cI}\beta_i^{-3/4}\right).
	\]
	The sum on the right-hand side of this inequality is at most
	\[
	\frac{\beta_{i_0}^{-3/4}}{1-20^{-3/4}}\le \frac{1}{2k},
	\]
	as we have assumed $e^2k\beta_{i_0}^{-3/4}<1$. Hence
	\be\label{smallk}
	\exp\left(2k\sum_{i=i_0}^{\cI}G_{i,\cI}(\gamma)\right)\le e\prod_{i=i_0}^{\cI}\left(\sum_{n=0}^{[e^2k\beta_i^{-3/4}]}\frac{(kG_{i,\cI})^n}{n!}\right)^2,
	\ee
	since the sums on the right-hand side are identically $1$. If we may take $i_0=1$, then we are done. Thus it suffices to assume $e^2k\beta_i^{-3/4}\ge 1$ for $i<i_0$. By Taylor's theorem with explicit remainder, we have
	\be\label{taylor1}
	e^x\left(1-\frac{e^{|x|}|x|^{N+1}}{(N+1)!}\right)\le \sum_{n=0}^N\frac{x^n}{n!}
	\ee
	for $x\in \R$ and any natural number $N$. 
	We take $x=kG_{i,\cI}(\gamma)$ and $N=[e^2k\beta_i^{-3/4}]$. Using the inequality $n!\geq (n/e)^n$, it can be shown that
	\[
	\frac{e^{|x|}|x|^{N+1}}{([e^2k\beta_i^{-3/4}]+1)!}\le e^{-k\beta_i^{-3/4}}
	\]
	for any $i\le \cI$. Using this in \eqref{taylor1}, we find that
	\[
	e^{kG_{i,\cI}(\gamma)}\Big(1-e^{-k\beta_i^{-3/4}}\Big)\le \sum_{n=0}^{[e^2k\beta_i^{-3/4}]}\frac{(kG_{i,\cI}(\gamma))^n}{n!}.
	\]
	After squaring both sides of this inequality for all $i<i_0$, it follows that
	\be\label{taylor2}
	\exp \left(2k\sum_{i=1}^{i_0-1}G_{i,\cI}(\gamma)\right)\prod_{i=1}^{i_0-1}\Big(1-e^{-k\beta_i^{-3/4}}\Big)^2\le \prod_{i=1}^{i_0-1}\left(\sum_{n=0}^{[e^2k\beta_i^{-3/4}]}\frac{(kG_{i,\cI}(\gamma))^n}{n!}\right)^2.
	\ee
	The product on the left-hand side of \eqref{taylor2} is
	\be\label{taylor3}
	\ge \exp\left(-2\sum_{i=1}^{i_0-1}\beta_i^{3/4}/k\right);
	\ee
	this follows from the inequality $1-e^{-u}\ge e^{-1/u}$ for $u>0$.
	Since $e^2k\beta_{i_0-1}^{-3/4}\ge 1$, the sum here is
	\[
	\sum_{i=1}^{i_0-1}\beta_i^{3/4}\le \frac{\beta_{i_0}^{3/4}}{20^{3/4}-1}\le 2\beta_{i_0-1}^{3/4}\le 2e^2k.
	\]
	This bound with \eqref{taylor2} and \eqref{taylor3} implies
	\be\label{taylor4}
	\exp\left(2k\sum_{i=1}^{i_0-1}G_{i,\cI}(\gamma)\right)\le e^{4e^2}\prod_{i=1}^{i_0-1}\left(\sum_{n=0}^{[e^2k\beta_i^{-3/4}]}\frac{(kG_{i,j}(\gamma))^n}{n!}\right)^2.
	\ee
	Combining this inequality with \eqref{smallk}, we conclude
	\[
	\exp\left(2k\sum_{i=1}^{\cI}G_{i,\cI}(\gamma)\right)\ll \prod_{i=1}^{\cI}\left(\sum_{n=0}^{[e^2k\beta_i^{-3/4}]}\frac{(kG_{i,j}(\gamma))^n}{n!}\right)^2
	\]
	with implied constant $e^{4e^2+1}$. Lastly, suppose there is no such $i_0$, i.e. $e^2k\beta_{\cI}^{-3/4}\ge 1$. Then the argument used to derive \eqref{taylor4} may be applied to all $i\le \cI$.
\end{proof}

The following lemma gives an upper bound on mixed discrete moments of the $G_{i,j}(\gamma)$ in terms of corresponding mixed moments of the random models $G_{i,j}(X)$.

\begin{lemma}\label{Lem-mixed moments}
	Assume RH. Let $k>0$ and let $j$ be a natural number with $j\le \cI$. Let $\hat{\ell}=(\ell_1,\ldots,\ell_j)$ be a $j$-tuple in $\Z_{\geq 0}^j$ whose components satisfy $\ell_i\le 2e^2k\beta_i^{-3/4}$ for $1\le i\le j$. Then
	\[
	\sum_{0<\gamma\le T}\prod_{i=1}^jG_{i,j}^{\ell_i}(\gamma)\le N(T,2T)\,\E \left[\prod_{i=1}^jG_{i,j}(X)^{\ell_i}\right]+O\big(T^{e/25}(\log T)^2\big).
	\]
\end{lemma}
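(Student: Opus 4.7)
The plan is to expand both sides as multi-index Dirichlet polynomials and apply Lemma~\ref{Lem-Landau} to each resulting $\gamma$-sum. Starting from $2G_{i,j}(t)=\sum_{n\in I_i}\frac{w_j(n)}{\sqrt n}(n^{-it}+n^{it})$ and expanding the product and the powers, I obtain
\[
\prod_{i=1}^j G_{i,j}(\gamma)^{\ell_i}=2^{-L}\sum_{\mathbf n,\boldsymbol\epsilon}\Big(\prod_{i,s}\frac{w_j(n_{i,s})}{\sqrt{n_{i,s}}}\Big)(A/B)^{-i\gamma},
\]
where $L=\sum_i\ell_i$, the sum ranges over tuples $\mathbf n=(n_{i,s})$ with $n_{i,s}\in I_i$ and sign vectors $\boldsymbol\epsilon=(\epsilon_{i,s})\in\{\pm1\}^L$, and $A$, $B$ denote the products of those $n_{i,s}$ for which $\epsilon_{i,s}=+1$ and $\epsilon_{i,s}=-1$ respectively. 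Running the analogous expansion on the random side and using $\E[X_A\overline{X_B}]=\mathbf{1}\{A=B\}$ (from independence of the $X_p$ and unique factorization) gives
\[
\E\Big[\prod_i G_{i,j}(X)^{\ell_i}\Big]=2^{-L}\sum_{\substack{\mathbf n,\boldsymbol\epsilon\\ A=B}}\prod_{i,s}\frac{w_j(n_{i,s})}{\sqrt{n_{i,s}}}.
\]

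Next I would interchange the $\gamma$-sum with the tuple sum and evaluate each inner sum via Lemma~\ref{Lem-Landau}. The $A=B$ terms contribute $N(T,2T)$ apiece, and assembled together they reproduce exactly $N(T,2T)\,\E[\prod_i G_{i,j}(X)^{\ell_i}]$. For $A\neq B$, the lemma (with the usual conjugation when $A<B$) gives a main term equal to a non-positive multiple of $T\Lambda(\max(A,B)/\min(A,B))/\sqrt{\max(A,B)/\min(A,B)}$, plus an error $O(\sqrt{AB}(\log T)^2)$. The key point is that this main term is $\le 0$, so it may be discarded in pursuit of an upper bound.

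It then remains to show that the accumulated off-diagonal error is $O(T^{e/25}(\log T)^2)$. The clean identity $\sqrt{AB}=\prod_{i,s}\sqrt{n_{i,s}}$ cancels the $\sqrt{n}$ denominators, and after the $2^L$ sign choices absorb the $2^{-L}$ prefactor, the total error is majorized by
\[
(\log T)^2\prod_{i=1}^j\Big(\sum_{n\in I_i}|w_j(n)|\Big)^{\ell_i}.
\]
Since $w_j$ is supported on prime powers $n$ with $\Lambda_{\cL}(n)\neq 0$ and satisfies $|w_j(n)|=O(1)$ there, each inner sum is $O(\log T)$. Combined with the bound $L\le 2e^2k\sum_{i\ge 1}\beta_i^{-3/4}=O(k(\log\log T)^{3/2})$, this gives a total error of size $(\log T)^{L+2}=\exp(O(k(\log\log T)^{5/2}))$, which is comfortably smaller than $T^{e/25}(\log T)^2$.

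The step I expect to require the most care is the sign bookkeeping in the off-diagonal analysis: one must verify that the symmetry between the $A>B$ and $A<B$ cases in Lemma~\ref{Lem-Landau} (each being the complex conjugate of the other) combines to produce a genuinely non-positive real main term when the contributions are aggregated, so that discarding it really yields the desired inequality rather than merely an estimate. Everything else—the combinatorial expansion, the orthogonality of the $X_p$, and the majorization of the error—is routine once this is in place.
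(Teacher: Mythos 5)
Your overall strategy matches the paper's proof exactly: expand the $\re$'s via $z+\bar z$, split into diagonal ($A=B$) and off-diagonal ($A\ne B$) pieces, detect the diagonal with $\E[X_A\overline{X_B}]$, invoke Lemma~\ref{Lem-Landau} on the off-diagonal terms, and discard the resulting non-positive main term. That portion of the argument is sound, and your reduction of the accumulated off-diagonal error to
\[
(\log T)^2\prod_{i=1}^j\Big(\sum_{n\in I_i}|w_j(n)|\Big)^{\ell_i}
\]
is also correct.

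However, the final step of your error analysis contains a genuine gap. You assert that $\sum_{n\in I_i}|w_j(n)|=O(\log T)$, apparently on the grounds that $w_j$ is supported where $\Lambda_{\cL}\ne0$ and hence on $n\le\cL=\log T$. But the restriction $n\le\cL$ in the definition of $\Lambda_{\cL}$ applies only to prime squares; for primes $p$ there is no such cap, and $\Lambda_{\cL}(p)=\log p$ for every prime $p\le T^{\beta_j}$. Thus for $p\in I_i=(T^{\beta_{i-1}},T^{\beta_i}]$ the weight $w_j(p)$ is bounded but not small, and the inner sum $\sum_{n\in I_i}|w_j(n)|$ is of size comparable to $\pi(T^{\beta_i})$, i.e.\ polynomial in $T$, not $O(\log T)$. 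Your conclusion $(\log T)^{L+2}=\exp\!\big(O(k(\log\log T)^{5/2})\big)$ therefore vastly underestimates the error; the step is false, and the fact that the false estimate lands below the target $T^{e/25}(\log T)^2$ does not salvage the argument. The paper instead bounds the inner sum trivially by $\#(I_i\cap\N)\le T^{\beta_i}$, so the accumulated error is $\ll(\log T)^2\prod_i T^{\beta_i\ell_i}$, and then the hypothesis $\ell_i\le 2e^2k\beta_i^{-3/4}$ together with the geometric growth $\beta_i=20^{i-1}\beta_1$ and the cutoff $\beta_{\cI}\le e^{-1000k}$ gives $\sum_i\beta_i\ell_i\le 10e^2k\beta_{\cI}^{1/4}\le 10e^2ke^{-250k}\le e/25$, which is precisely what produces the stated $O(T^{e/25}(\log T)^2)$. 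Replacing your $O(\log T)$ claim with this trivial count closes the gap.
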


\begin{proof}
	We begin with the identity $\re(z)=\frac{1}{2}(z+\overline{z})$ and write
	\[
	G_{i,j}(\gamma)=\tfrac{1}{2}\sum_{n\in I_i}\frac{w_j(n)}{\sqrt{n}}(n^{-i\gamma}+n^{i\gamma}).
	\]
	Thus we can expand the $\ell_i$-th power of $G_{i,j}(\gamma)$ as
	\[
	2^{-\ell_i}\sum_{T<\gamma\le 2T}\sum_{n_{i,1},\ldots,n_{i,\ell_i}\in I_i}\frac{w_j(n_{i,1})\cdots w_j(n_{i,\ell_i})}	{\sqrt{n_{i,1}\cdots n_{i,\ell_i}}}\prod_{l=1}^{\ell_i}\Big(n_{i,l}^{-i\gamma}+n_{i,l}^{i\gamma}\Big),
	\]
	where $n_{i,l}$ denotes the $l$-th entry of the $\ell_i$-tuple $(n_{i,1},\ldots,n_{i,\ell_i})\in \N^{\ell_i}$. Multiplying all such expressions for $i\le j$ together and summing over $\gamma$, we see that
	\begin{multline}\label{maingammasum}
	\sum_{T<\gamma\le 2T}\prod_{i=1}^jG_{i,j}^{\ell_i}(\gamma)\\
	=2^{-(\ell_1+\cdots+\ell_j)}\sum_{T<\gamma\le 2T}\sum_{\hat{n}_1\in (I_1\cap \N)^{\ell_1}}\cdots \sum_{\hat{n}_j\in (I_j\cap \N)^{\ell_j}}\prod_{i=1}^j\prod_{l=1}^{\ell_i}\frac{w_j(n_{i,l})}{\sqrt{n_{i,l}}}\Big(n_{i,l}^{-i\gamma}+n_{i,l}^{i\gamma}\Big).
	\end{multline}
	Moving the sum over $\gamma$ inside, we ultimately need to consider sums of the form
	\be\label{gamma}
	\sum_{T<\gamma\le 2T}\prod_{i=1}^j\prod_{l=1}^{\ell_i}\Big(n_{i,l}^{-i\gamma}+n_{i,l}^{i\gamma}\Big).
	\ee
	Let $\hat{e}$ denote a $(\ell_1+\cdots +\ell_j)$-tuple in $\{-1,1\}^{\ell_1}\times \cdots \times \{-1,1\}^{\ell_j}=\{-1,1\}^{L_j}$,
	where $L_j=\ell_1+\cdots +\ell_j$. Then we may expand the double product in \eqref{gamma} as
	\be\label{gammasum}
	\sum_{\hat{e}\in \{-1,1\}^{L_j}}\prod_{i=1}^j\prod_{l=1}^{\ell_i}n_{i,l}^{ie_{i,l}\gamma},
	\ee
	where $e_{i,l}$ is the $l$-th entry ($1\le l\le \ell_i$) of the $i$-th piece $\{-1,1\}^{\ell_i}$ of $\hat{e}\in \{-1,1\}^{L_j}$. Alternatively, $e_{i,l}$ is the $(\ell_1+\cdots \ell_{i-1}+l)$-th entry of the full $L_j$-tuple. If
	\[
	\prod_{i=1}^j\prod_{l=1}^{\ell_i}n_{i,l}^{e_{i,l}}=1,
	\]
	then summing \eqref{gammasum} over $\gamma$ simply yields $N(T,2T)$, the number of $\gamma$ in $(T,2T]$. For all other terms we may apply Lemma~\ref{Lem-Landau}. Hence \eqref{gamma} is
	\begin{multline}\label{postlandau}
	N(T,2T)\,\Big(\sum_{\substack{\hat{e}\in \{-1,1\}^{L_j}\\  n_{1,1}^{e_{1,1}}\cdots n_{j,\ell_j}^{e_{j,\ell_j}}=1}}1\Big)-\frac{T}{\pi}\sum_{\substack{\hat{e}\in \{-1,1\}^{L_j}\\ n_{1,1}^{e_{1,1}}\cdots n_{j,\ell_j}^{e_{j,\ell_j}}>1}}\frac{\Lambda(n_{1,1}^{e_{1,1}}\cdots n_{j,\ell_j}^{e_{j,\ell_j}})}{\sqrt{n_{1,1}^{e_{1,1}}\cdots n_{j,\ell_j}^{e_{j,\ell_j}}}}\\
	+O\Big(2^{L_j}\sqrt{n_{1,1}\cdots n_{j,\ell_j}}(\log T)^2\Big).
	\end{multline}
	Here we have grouped together conjugate pairs in the second sum. The term involving the second sum in \eqref{postlandau} is non-positive due to the factor $-T/\pi$. Hence we may omit the whole term to obtain an upper bound. Taking this upper bound and using it in \eqref{maingammasum}, we obtain
	\begin{multline}\label{postlandausum}
	\sum_{T<\gamma\le 2T}\prod_{i=1}^jG_{i,j}^{\ell_i}(\gamma)\\
	\le N(T,2T)\sum_{\substack{\hat{n}_1\in (I_1\cap\N)^{\ell_1}\\ n_{1,l}\in I_1\\ \text{for }1\le l\le \ell_1}}\cdots \sum_{\substack{\hat{n}_j\in (I_j\cap\N)^{\ell_j}\\ n_{j,l}\in I_j\\ \text{for }1\le l\le \ell_j}}\bigg(\prod_{i=1}^j\prod_{l=1}^{\ell_i}\frac{w_j(n_{i,l})}{\sqrt{n_{i,l}}}\bigg)\Big(2^{-L_j}\sum_{\substack{\hat{e}\in \{-1,1\}^{L_j}\\ n_{1,1}^{e_{1,1}}\cdots n_{j,\ell_j}^{e_{j,\ell_j}}=1}}1\Big)\\
	+O\Big((\log T)^2 \sum_{\substack{\hat{n}_1\in (I_1\cap\N)^{\ell_1}\\ n_{1,l}\in I_1\\ \text{for }1\le l\le \ell_1}}\cdots \sum_{\substack{\hat{n}_j\in (I_j\cap\N)^{\ell_j}\\ n_{j,l}\in I_j\\ \text{for }1\le l\le \ell_j}}1\Big).	
	\end{multline}
	The error term here is
	\[
	\ll (\log T)^2\prod_{i=1}^jT^{\beta_i\ell_i}\ll T^{10e^2k e^{-250k}}(\log T)^2\le T^{e/25}(\log T)^2.
	\]
	To handle the main term, we detect the condition $n_{1,1}^{e_{1,1}}\cdots n_{j,\ell_j}^{e_{j,\ell_j}}=1$ with the expectation
	\[
	\E\Big[X_{n_{1,1}}^{e_{1,1}}\cdots X_{n_{j,\ell_j}}^{e_{j,\ell_j}}\Big]=
	\begin{cases}
	1 &\hbox{if }n_{1,1}^{e_{1,1}}\cdots n_{j,\ell_j}^{e_{j,\ell_j}}=1,\\
	0 &\hbox{otherwise}.
	\end{cases}
	\]	
	Then the leading term in \eqref{postlandausum} may be written
	\[
	N(T,2T)\sum_{\substack{\hat{n}_1\in (I_1\cap\N)^{\ell_1}\\ n_{1,l}\in I_1\\ \text{for }1\le l\le \ell_1}}\cdots \sum_{\substack{\hat{n}_j\in (I_j\cap\N)^{\ell_j}\\ n_{j,l}\in I_j\\ \text{for }1\le l\le \ell_j}}\bigg(\prod_{i=1}^j\prod_{l=1}^{\ell_i}\frac{w_j(n_{i,l})}{\sqrt{n_{l_i}}}\bigg)\Big(2^{-L_j}\sum_{\hat{e}\in \{-1,1\}^{L_j}}\E\Big[X_{n_{1,1}}^{e_{1,1}}\cdots X_{n_{j,\ell_j}}^{e_{j,\ell_j}}\Big]\Big).
	\]
	The innermost sum here is
	\[
	\sum_{\hat{e}\in\{-1,1\}^{L_j}}\E\left[\prod_{i=1}^j\prod_{l=1}^{\ell_i}X_{n_{i,l}}^{e_{i,l}}\right]=\E\left[\prod_{i=1}^j\prod_{l=1}^{\ell_i}\big(X_{n_{i,l}}^{-1}+X_{n_{i,l}}\big)\right].
	\]
	Moving the expectation outside, we see that our leading term in \eqref{postlandausum} is
	\[
	\E\bigg[2^{-L_j}\sum_{T<\gamma\le 2T}\sum_{\hat{n}_1\in (I_1\cap \N)^{\ell_1}}\cdots \sum_{\hat{n}_j\in (I_j\cap \N)^{\ell_j}}\prod_{i=1}^j\prod_{l=1}^{\ell_i}\frac{w_j(n_{i,l})}{\sqrt{n_{i,l}}}\big(X_{n_{i,l}}^{-1}+X_{n_{i,l}}\big)\bigg].
	\]
	Now we reverse our steps leading up to \eqref{maingammasum} with $n_{i,l}^{-i\gamma}$ replaced with $X_{n_{i,l}}$. This completes the proof.
\end{proof}

We are now prepared to prove an upper bound for the average of $\exp (2k\sum_{i=1}^{\cI}G_{i,\cI}(\gamma))$ over $\gamma\in \cT$. By Proposition~\ref{Prop 1}, this is approximately the corresponding average of $|\zeta'(\rho)|^{2k}$ over $\cT$.

\begin{lemma}\label{Lem-T}
	Assume RH. Let $k>0$. Then
	\[
	\sum_{\gamma\in \cT}\exp\left(2k\sum_{i=1}^{\cI}G_{i,\cI}(\gamma)\right)
	\ll N(T,2T)\,\E \left[\exp\left(2k\sum_{i=1}^{\cI}G_{i,\cI}(X_p)\right)\right]+e^{2k}T^{e/5}(\log T)^2
	\]
	as $T\to \infty$.
\end{lemma}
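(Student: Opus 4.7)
The plan is to apply Lemma~\ref{Lem-Taylor} to dominate the exponential by a squared Taylor truncation, invoke Lemma~\ref{Lem-mixed moments} monomial by monomial, and then repackage the random-side main terms back into an exponential.

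Set $N_i=[e^2k\beta_i^{-3/4}]$. By Lemma~\ref{Lem-Taylor} on $\cT$,
\[
\sum_{\gamma\in\cT}\exp\!\Big(2k\sum_{i=1}^{\cI}G_{i,\cI}(\gamma)\Big)\ll \sum_{\gamma\in\cT}\prod_{i=1}^{\cI}\Bigg(\sum_{n=0}^{N_i}\frac{(kG_{i,\cI}(\gamma))^n}{n!}\Bigg)^{\!2}.
\]
The summand is a product of nonnegative squares, so I would drop the restriction $\gamma\in\cT$ and extend the sum to all $T<\gamma\le 2T$. Expanding the squares yields $\sum_{\hat\ell}c(\hat\ell)\prod_iG_{i,\cI}(\gamma)^{\ell_i}$ with coefficients $c(\hat\ell)\ge 0$ and exponents $\ell_i\le 2N_i\le 2e^2k\beta_i^{-3/4}$, which fits the hypothesis of Lemma~\ref{Lem-mixed moments}.

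Applied to each monomial, that lemma yields a main term $N(T,2T)\,\E[\prod_iG_{i,\cI}(X)^{\ell_i}]$ plus error $O(T^{e/25}(\log T)^2)$. Summing the main terms against $c(\hat\ell)$ reassembles
\[
N(T,2T)\,\E\Bigg[\prod_{i=1}^{\cI}\Bigg(\sum_{n=0}^{N_i}\frac{(kG_{i,\cI}(X))^n}{n!}\Bigg)^{\!2}\Bigg],
\]
while summing the errors contributes at most $(\log T)^2 T^{e/25}\prod_i(\sum_{n\le N_i}k^n/n!)^2\le (\log T)^2\,T^{e/25}\,e^{2k\cI}$. The constraint $\beta_\cI\le e^{-1000k}$ forces $\cI\ll_k\log\log T$, so this error is absorbed into $e^{2k}T^{e/5}(\log T)^2$ for $T$ sufficiently large.

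The main obstacle is then to bound the polynomial expectation above by $\E[\exp(2k\sum_iG_{i,\cI}(X))]$. A direct Taylor-coefficient comparison fails because joint odd-order moments of the $G_{i,\cI}(X)$ need not be nonnegative: the relation $X_{p^2}=X_p^2$ breaks any naive $X_p\mapsto-X_p$ symmetry. Instead I would split the expectation on the event $\cT_X$ that $|G_{i,\cI}(X)|\le\beta_i^{-3/4}$ for every $i$. On $\cT_X$, the same Taylor-remainder estimate behind the proof of Lemma~\ref{Lem-Taylor} (with $|kG_{i,\cI}(X)|\le N_i/e^2$) gives the reverse inequality $\prod_i(\sum_n(kG_{i,\cI}(X))^n/n!)^2\le(1+o(1))\exp(2k\sum_iG_{i,\cI}(X))$. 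Off $\cT_X$, a Hoeffding-type bound for the Dirichlet polynomial $G_{i,\cI}(X)$ (which has variance $O(1)$ by Mertens' theorem) gives $\Pr(|G_{i,\cI}(X)|>\beta_i^{-3/4})\ll\exp(-c\beta_i^{-3/2})$, super-polynomially small in $\log T$, and this decay easily defeats the polynomial size of $\prod_iP_i^2$ on the complementary event.
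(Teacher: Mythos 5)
Your opening moves mirror the paper exactly: apply Lemma~\ref{Lem-Taylor} on $\cT$, extend the sum to all $T<\gamma\le 2T$ by positivity of the squares, expand, apply Lemma~\ref{Lem-mixed moments} monomial-by-monomial, and collect the error term. All of that is fine. The divergence comes at the final step, and there your worry is unfounded: the joint moments $\E\bigl[\prod_i G_{i,\cI}(X)^{\ell_i}\bigr]$ \emph{are} nonnegative, for every $\hat\ell\in\Z_{\ge0}^{\cI}$. You correctly note that the naive $X_p\mapsto -X_p$ symmetry argument fails once $X_{p^2}=X_p^2$ enters, but nonnegativity does not require odd moments to vanish. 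Writing each $\re X_n = \tfrac12(X_n+X_n^{-1})$ and expanding, the expectation becomes a sum, over integer tuples $\hat n$ and sign tuples $\hat e$, of nonnegative weights $\prod w_j(n_{i,l})/\sqrt{n_{i,l}}$ multiplied by $\E\bigl[\prod X_{n_{i,l}}^{e_{i,l}}\bigr]$ --- and this last expectation always lies in $\{0,1\}$, since by independence and the unit-circle distribution it is $\prod_p\E[X_p^{m_p}]$ with $\E[X_p^{m_p}]\in\{0,1\}$. (This is precisely what the paper means when it says ``recall from \eqref{postlandausum} that it may be expressed as a sum of nonnegative terms.'') So the direct Taylor-coefficient comparison \emph{does} work: since the coefficients $k^{m+n}/(m!\,n!)$ and the moments are all nonnegative, you can extend the finite Taylor sums to all of $\Z_{\ge0}$ and immediately reassemble $\E\bigl[\exp(2k\sum_i G_{i,\cI}(X))\bigr]$, which is exactly the paper's one-line finish.

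Your substitute route (split on the random event $\cT_X$ and use concentration off it) is therefore an unnecessary detour, and as stated it also has gaps. First, the variance of $G_{1,\cI}(X)$ is $\asymp\tfrac12\log\log T$ (since $\sum_{p\le T^{\beta_1}}1/p\sim\log\log T$), not $O(1)$; the $O(1)$ claim only holds for $i\ge2$. Second, and more seriously, for indices $i$ near $\cI$ the quantity $\beta_i$ is of constant size $\approx e^{-1000k}$, so the tail probability $\Pr(|G_{i,\cI}(X)|>\beta_i^{-3/4})$ is merely a constant (depending on $k$), \emph{not} super-polynomially small in $\log T$. Your claim that the decay ``easily defeats the polynomial size of $\prod_i P_i^2$'' therefore does not hold uniformly in $i$, and the off-$\cT_X$ contribution would need a genuinely different treatment (some Cauchy--Schwarz against $\E[\prod P_i^4]$, say, with careful $i$-by-$i$ bookkeeping) rather than a union bound. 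None of this extra work is needed once you notice the moments are nonnegative.
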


\begin{proof}
	By Lemma~\ref{Lem-Taylor} we have
	\be\label{presquare}
	\sum_{\gamma\in \cT}\exp\left(2k\sum_{i=1}^{\cI}G_{i,\cI}(\gamma)\right)\ll\sum_{\gamma\in \cT}\prod_{i=1}^{\cI}\left(\sum_{n=0}^{[e^2k\beta_i^{-3/4}]}\frac{(kG_{i,\cI}(\gamma))^n}{n!}\right)^2.
	\ee
	All of the terms here are squared and, hence, nonnegative. Consequently, we may extend the sum to all $T<\gamma\le 2T$ and still have an upper bound. Hence, after expanding the square, we see that the right-hand side of \eqref{presquare} is bounded from above by
	\[
	\sum_{T<\gamma\le 2T}\prod_{i=1}^{\cI}\left(\sum_{m,n=0}^{[e^2k\beta_i^{-3/4}]}\frac{k^{m+n}G_{i,\cI}(\gamma)^{m+n}}{(m!)(n!)}\right).
	\]
	We expand the product and move the sum over $\gamma$ inside to get
	\be\label{postsquare}
	\sum_{m_1,n_1=0}^{[e^2k\beta_1^{-3/4}]}\cdots \sum_{m_{\cI},n_{\cI}=0}^{[e^2k\beta_{\cI}^{-3/4}]}\frac{k^{m_1+n_1+\cdots m_{\cI}+n_{\cI}}}{(m_1!)(n_1!)\cdots (m_{\cI}!)(n_{\cI}!)}\sum_{T<\gamma\le 2T}\prod_{i=1}^{\cI}G_{i,\cI}(\gamma)^{m_i+n_i}.
	\ee
	By Lemma~\ref{Lem-mixed moments}, the inner-most sum here is
	\[
	\sum_{T<\gamma\le 2T}\prod_{i=1}^{\cI}G_{i,\cI}(\gamma)^{m_i+n_i}\le N(T,2T)\,\E \left[ \prod_{i=1}^{\cI}G_{i,\cI}(X)^{m_i+n_i}\right]+O(T^{e/25}(\log T)^2).
	\]
	Therefore \eqref{postsquare} is
	\begin{multline}\label{postexp}
	\le N(T,2T)\,\E \left[\sum_{m_1,n_1=0}^{[e^2k\beta_1^{-3/4}]}\cdots \sum_{m_{\cI},n_{\cI}=0}^{[e^2k\beta_{\cI}^{-3/4}]}\prod_{i=1}^{\cI}\frac{k^{m_i+n_i}}{(m_i!)(n_i!)}G_{i,\cI}(X)^{m_i+n_i}\right]\\
	+O\left(T^{e/25}(\log T)^2 \sum_{m_1,n_1=0}^{[e^2k\beta_1^{-3/4}]}\cdots \sum_{m_{\cI},n_{\cI}=0}^{[e^2k\beta_{\cI}^{-3/4}]}\prod_{i=1}^{\cI}\frac{k^{m_i+n_i}}{(m_i!)(n_i!)}\right).
	\end{multline}
	The $O$-term may be refactored as
	\be\label{postexpoterm}
	T^{e/25}(\log T)^2\prod_{i=1}^{\cI}\left(\sum_{n=0}^{[e^2\beta_i^{-3/4}]}\frac{k^n}{n!}\right)^2\le e^{2k}T^{e/25}(\log T)^2.
	\ee
	For the main term in \eqref{postexp}, note that $\prod_{i=1}^{\cI}\E[G_{i,\cI}(X)^{m_i+n_i}]$ is nonnegative. To see this, recall from \eqref{postlandausum} that it may be expressed as a sum of nonnegative terms. Therefore we may extend the sums to all $m_1,n_1,\ldots,m_{\cI},n_{\cI}\geq 0$ to get an upper bound.
	Hence our main term in \eqref{postexp} is
	\[
	\le N(T,2T)\,\E\left[\sum_{m_1,n_1=0}^{\infty}\cdots \sum_{m_{\cI},n_{\cI}=0}^{\infty}\prod_{i=1}^{\cI}	\frac{k^{m_i+n_i}}{(m_i!)(n_i!)}G_{i,\cI}(X)^{m_i+n_i}\right].
	\]
	This may be refactored as
	\[
	N(T,2T)\,\E\left[\prod_{i=1}^{\cI}\Big(\sum_{n=0}^{\infty}\frac{k^n}{n!}G_{i,\cI}(X)^n\Big)^2\right]=N(T,2T)\,\E \left[\exp \left(2k\sum_{i=1}^{\cI}G_{i,\cI}(X)\right)\right].
	\]
	Combining this with \eqref{postexpoterm} and \eqref{postexp}, we obtain the claimed upper bound.
\end{proof}

For the average over $S(j)$, we have to be more careful than we were for $\cT$ in Lemma~\ref{Lem-T}. This is because there are $\cI \asymp \log\log\log T$ subsets $S(j)$. We will exploit the fact that $\gamma \in S(j)$ implies $|G_{j+1,\ell}(\gamma)|\ge \beta_{j+1}^{-3/4}$ for some $j+1\le \ell\le \cI$.
\begin{lemma}\label{Lem-S(j)}
	Assume RH. Let $k>0$. For $1\le j\le \cI-1$, we have
	\begin{multline*}
	\sum_{\gamma\in S(j)}\exp \left(2k\sum_{i=1}^jG_{i,j}(\gamma)\right)
	\ll e^{-1/21\beta_{j+1}\log (1/\beta_{j+1})}N(T,2T)\E\left[\exp\left(2k\re\sum_{i=1}^jG_{i,j}(X)\right)\right]\\
	+T^{(e+5)/25}(\log T)^2
	\end{multline*}
	as $T\to \infty$. We also have
	\[
	\#S(0)\ll N(T,2T)e^{-(\log\log T)^2/10}+e^{2k}T^{(e+5)/25}(\log T)^2.
	\]
\end{lemma}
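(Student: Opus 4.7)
The plan is to exploit the defining condition of $S(j)$ by inserting a high-moment indicator that isolates the ``bad'' Dirichlet polynomial. Since $\gamma\in S(j)$ means $G_{j+1,\ell}(\gamma)^{2}>\beta_{j+1}^{-3/2}$ for some $\ell\in\{j+1,\ldots,\cI\}$, for any positive integer $M$ we have
\[
\mathbf{1}_{S(j)}(\gamma)\le \beta_{j+1}^{3M/2}\sum_{\ell=j+1}^{\cI}G_{j+1,\ell}(\gamma)^{2M}.
\]
Combining this with the second part of Lemma~\ref{Lem-Taylor} and dropping the restriction to $S(j)$ (since all the resulting terms are nonnegative), the sum on the left-hand side of Lemma~\ref{Lem-S(j)} is bounded by
\[
\beta_{j+1}^{3M/2}\sum_{\ell=j+1}^{\cI}\sum_{T<\gamma\le 2T} G_{j+1,\ell}(\gamma)^{2M}\prod_{i=1}^j\Bigl(\sum_{n=0}^{[e^2k\beta_i^{-3/4}]}\frac{(kG_{i,j}(\gamma))^n}{n!}\Bigr)^2.
\]

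Next I would expand the product of squared Maclaurin polynomials into a linear combination of monomials $\prod_{i=1}^j G_{i,j}(\gamma)^{m_i+n_i}$ and apply (a mild extension to mixed weights of) Lemma~\ref{Lem-mixed moments} to each resulting discrete sum, taking $\ell_i=m_i+n_i$ for $i\le j$ and $\ell_{j+1}=2M$. The hypothesis forces $M\le e^2k\beta_{j+1}^{-3/4}$, which is the binding constraint on $M$. Because the prime intervals $I_1,\ldots,I_{j+1}$ are disjoint, $G_{j+1,\ell}(X)$ is independent of the $G_{i,j}(X)$ for $i\le j$, and the resulting expectation factors as $\E[G_{j+1,\ell}(X)^{2M}]\cdot\E[\prod_{i=1}^j(\text{partial sum})^2]$. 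Reversing the Maclaurin truncation on the second factor---exactly as at the end of Lemma~\ref{Lem-T}, using the nonnegativity of $\E[G_{i,j}(X)^{m+n}]$ that is apparent from \eqref{postlandausum}---identifies it with $\E[\exp(2k\sum_{i=1}^j G_{i,j}(X))]$.

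The remaining task is the moment estimate for $\E[G_{j+1,\ell}(X)^{2M}]$. Expanding the $2M$-th power and using $\E[X_{n_1}^{e_1}\cdots X_{n_{2M}}^{e_{2M}}]=\mathbf{1}_{\prod n_k^{e_k}=1}$ yields the pairing bound $\E[G_{j+1,\ell}(X)^{2M}]\ll M!\,V^M$, where $V=\sum_{n\in I_{j+1}}|w_\ell(n)|^2/n$ is bounded by a constant for $j\ge 1$ (the geometric ratio $\beta_{j+1}/\beta_j=20$ forces the relevant prime harmonic sum to be $O(1)$). Applying Stirling gives a saving of shape $(CMV\beta_{j+1}^{3/2}/e)^M$, and optimizing $M$ up against the constraint $M\le e^2k\beta_{j+1}^{-3/4}$---together with absorbing the factor $\cI=O(\log\log\log T)$ from summing over $\ell$---produces the claimed $e^{-1/(21\beta_{j+1}\log(1/\beta_{j+1}))}$. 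The accumulated error from Lemma~\ref{Lem-mixed moments} across the finitely many expanded monomials furnishes the $O(T^{(e+5)/25}(\log T)^2)$ term.

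The bound on $\#S(0)$ is obtained by the same recipe, minus the Taylor approximation step: from $\mathbf{1}_{S(0)}(\gamma)\le\beta_1^{3M}\sum_{\ell=1}^{\cI}G_{1,\ell}(\gamma)^{2M}$, Lemma~\ref{Lem-mixed moments} with $\ell_1=2M$, and the Steinhaus moment bound $\E[G_{1,\ell}(X)^{2M}]\ll M!\,V_1^M$ (where $V_1\asymp\log\log\log T$ owing to the short primes constituting $I_1$), the choice $M\asymp(\log\log T)^{3/2}$ saturates the constraint and yields the stated $e^{-(\log\log T)^2/10}$. The principal obstacle throughout is the tension between the indicator power $M$---which one wants large---and the ceiling $M\le e^2 k\beta_{j+1}^{-3/4}$ imposed by the error term in Gonek's uniform Landau formula inside Lemma~\ref{Lem-mixed moments}: this prevents reaching the unconstrained optimum $M\asymp\beta_{j+1}^{-3/2}$ and demands a careful Stirling-based optimization rather than a one-shot Chebyshev bound.
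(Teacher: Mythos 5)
Your high-level strategy matches the paper's: insert a high even power of the ``bad'' Dirichlet polynomial $G_{j+1,\ell}$ as an indicator for $S(j)$, sum over $\ell$, factor the expectation by independence, and bound the Steinhaus moment with Stirling. However, there is a genuine and fatal gap in your identification of the binding constraint on the power $M$.

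You take $M\le e^2k\beta_{j+1}^{-3/4}$ as ``the binding constraint'' inherited from Lemma~\ref{Lem-mixed moments}, and optimize against it. This choice does not yield the stated saving. With $M\asymp\beta_{j+1}^{-3/4}$, the Stirling bound $\beta_{j+1}^{3M/2}(M/e)^M V^M$ becomes $(5ek\beta_{j+1}^{3/4})^M\approx e^{-\frac34 e^2 k\,\beta_{j+1}^{-3/4}\log(1/\beta_{j+1})}$. In \S 6.2 this must overcome the prefactor $e^{2k/\beta_j}=e^{40k/\beta_{j+1}}$, which requires $\beta_{j+1}^{1/4}\log(1/\beta_{j+1})\gg 1$; but $\beta_{j+1}^{1/4}\log(1/\beta_{j+1})\to 0$ as $\beta_{j+1}\to 0$, so the bound fails for all but the largest few $j$. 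The paper instead takes the far larger $M=[1/(10\beta_{j+1})]$, which \emph{violates} the stated hypothesis of Lemma~\ref{Lem-mixed moments}; the constraint there is only what was needed for the $T^{e/25}$ error in Lemma~\ref{Lem-T}, and the paper's proof of Lemma~\ref{Lem-S(j)} re-derives the mixed-moment estimate for the larger exponent, paying only the extra factor $T^{\beta_{j+1}\cdot 2M}\le T^{1/5}$ and landing on the error $T^{(e+5)/25}(\log T)^2$ claimed in the statement. With $M\asymp 1/\beta_{j+1}$ one gets $\beta_{j+1}^{3M/2}(M/e)^M V^M\approx(\beta_{j+1}^{1/2}/2e)^M\approx e^{-\log(1/\beta_{j+1})/(20\beta_{j+1})}$, which is exactly what is needed.

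There are secondary issues in your $S(0)$ treatment. Your indicator power should read $\beta_1^{3M/2}$, not $\beta_1^{3M}$, since the defining condition of $S(0)$ controls the first power of $|G_{1,\ell}(\gamma)|$. More importantly, $V_1=\sum_{n\in I_1}w_\ell(n)^2/n\asymp\log\log T$ (Mertens' theorem applied up to $T^{\beta_1}$), not $\asymp\log\log\log T$; the quantity that is $O(\log\log\log T)$ is the sub-sum over the squared primes $p^2$ with $p\le\log T$, which enters the paper's argument for a different reason --- to isolate and absorb the non-independent terms $X_p^2$ that only occur in $I_1$. Your choice $M\asymp(\log\log T)^{3/2}$ gives a bound of shape $e^{-c(\log\log T)^{3/2}\log\log\log T}$, not the stated $e^{-(\log\log T)^2/10}$; the latter comes from $M=[1/(10\beta_1)]\asymp(\log\log T)^2$, which again exceeds the constraint you impose. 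And your proposal omits the dependence issue in $I_1$ entirely, which the paper must handle (via the decomposition of $G_{1,\ell}(X)^{2M}$) before the Steinhaus moment bound can be applied.
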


\begin{proof}
	As in the previous proof, we apply Lemma~\ref{Lem-Taylor} to see that
	\be\label{jpresquare}
	\sum_{\gamma\in S(j)}\exp\left(2k\sum_{i=1}^jG_{i,j}(\gamma)\right)\ll \sum_{\gamma\in S(j)}\prod_{i=1}^j\left(\sum_{n=1}^{[e^2k\beta_i^{-3/4}]}\frac{(kG_{i,j}(\gamma))^n}{n!}\right)^2.
	\ee
	This is valid for $0\le j< \cI$ if we take the empty sum to be $0$. By \eqref{defS(j)} and \eqref{defS(0)}, $\gamma\in S(j)$ implies $1\le \beta_{j+1}^{3/4}|G_{j+1,\ell}(\gamma)|$ for some $j+1\le \ell\le \cI$. Hence \eqref{jpresquare} is
	\be\label{jexploit}
	\le \sum_{\ell=j+1}^{\cI} \sum_{\gamma \in S(j)}\prod_{i=1}^j\left(\sum_{n=1}^{[e^2k\beta_i^{-3/4}]}\frac{(kG_{i,j}(\gamma))^n}{n!}\right)^2\Big(\beta_{j+1}^{3/4}G_{j+1,\ell}(\gamma)\Big)^{2[1/10\beta_{j+1}]}.
	\ee
	Because of the squared terms, we may extend the sum over $\gamma\in S(j)$ to $T<\gamma\le 2T$. Expand the squares and product as we did in the proof of Lemma~\ref{Lem-T}. Thus \eqref{jexploit} is bounded from above by
	\begin{multline*}
	\sum_{\ell=j+1}^{\cI}(\beta_{j+1}^{3/4})^{2[1/10\beta_{j+1}]}\sum_{m_1,n_1=0}^{[e^2k\beta_1^{-3/4}]}\cdots \sum_{m_j,n_j=0}^{[e^2k\beta_j^{-3/4}]}\frac{k^{m_1+n_1+\cdots m_j+n_j}}{(m_1!)(n_1!)\cdots (m_j!)(n_j!)}\\
	\times\sum_{T<\gamma\le 2T}G_{j+1,\ell}(\gamma)^{2[1/10\beta_{j+1}]}\prod_{i=1}^jG_{i,j}(\gamma)^{m_i+n_i}.
	\end{multline*}
	We can get an upper bound for this new expression by carefully following the proof of Lemma~\ref{Lem-T} for each $\ell$. Namely, it is
	\begin{multline}\label{jpostexp}
	\ll (\beta_{j+1}^{3/4})^{2[1/10\beta_{j+1}]}\sum_{\ell=j+1}^{\cI}N(T,2T)\E \left[G_{j+1,\ell}(X)^{2[1/10\beta_{j+1}]}\exp\left(2k\sum_{i=1}^jG_{i,j}(X)\right)\right]\\
	+T^{e/25+1/5}(\log T)^2.
	\end{multline}
	There is no $e^{2k}$ in the $O$-term in this case, since
	\[
	\big(\beta_{j+1}^{3/4}\big)^{2[1/10\beta_{j+1}]}\le e^{-750k/20}\le e^{-2k}.
	\]
	Consider the expectation in \eqref{jpostexp}. If $T$ is large, then we certainly have $T^{\beta_1}>\log T$. Likewise we also have $p^2\le T^{\beta_1}$ if $p<\log T$. By the definition of $\Lambda_{\cL}$, it follows that $G_{j+1,\ell}(X)$ and $G_{i,j}(X)$ are independent for $i\le j$ if $j\ge 1$. Thus the expectation in \eqref{jpostexp} is
	\be\label{independent}
	\E\Big[G_{j+1,\ell}(X)^{2[1/10\beta_{j+1}]}\Big]\cdot \E\left[\exp\left(2k\sum_{i=1}^jG_{i,j}(X)\right)\right].
	\ee
	This still holds for $j=0$, as the second expectation here is precisely $1$. We estimate the first expectation in \eqref{independent} for $j\ge 0$ as follows. 
	For $j\ge 1$, this first expectation is
	\[
	\frac{(2[1/10\beta_{j+1}])!}{2^{2[1/10\beta_{j+1}]}([1/10\beta_{j+1}])!}\bigg(\sum_{p\in I_{j+1}}\frac{w_{\ell}(p)^2}{p}\bigg)^{[1/10\beta_{j+1}]}\ll \bigg(\frac{1}{10e\beta_{j+1}}\sum_{T^{\beta_j}<p\le T^{\beta_{j+1}}}\frac{1}{p}\bigg)^{[1/10\beta_{j+1}]}
	\]
	by Stirling's approximation.
	The sum of reciprocal primes here is $\le 5$ for large $T$.
	It follows that
	\be\label{expbound}
	\big(\beta_{j+1}^{3/4}\big)^{2[1/10\beta_{j+1}]}\E\Big[G_{j+1,\ell}(X)^{2[1/10\beta_{j+1}]}\Big]\ll e^{-\frac{1}{2}[1/10\beta_{j+1}]\log(1/\beta_{j+1})}
	\ee
	for sufficiently large $T$ and any $\ell\ge j+1\ge 2$. Now, there are $\cI-j$ terms in the sum over $\ell$ in \eqref{jpostexp}. Observe that $\beta_{\cI}\le 1$ implies
	\[
	\cI-j\le =\frac{\log (\beta_{\cI}/\beta_j)}{\log 20}\ll \log (1/\beta_{j+1}).
	\]
	This bound along with \eqref{expbound} implies that \eqref{jpostexp} is
	\[
	\ll e^{-1/21\beta_{j+1}\log (1/\beta_{j+1})}N(T,2T)\,\E\left[\exp\left(2k\sum_{i=1}^jG_{i,j}(X)\right)\right]+T^{(e+5)/25}(\log T)^2.
	\]
	for $j\ge 1$. This proves the first claim. For $j=0$, the expectation
	\[
	\E\Big[G_{j+1,\ell}(X)^{2[1/10\beta_{j+1}]}\Big]
	\]
	from \eqref{jexploit} is slightly more complicated than the $j\ge 1$ case. This is because $G_{1,\ell}(X)$ includes some nonzero terms corresponding to squared primes, and clearly $X_p$ and $X_{p^2}=X_p^2$ are not independent. 
	Since $w_{\ell}(n)\le 1$, the sum over squared primes in $G_{1,\ell}(X)$ is at most
	\[
	\sum_{p\le \log T}\frac{w_{\ell}(p^2)}{p}\le 2\log\log\log T
	\]
	for large $T$. Thus
	\[
	G_{1,\ell}(X)^{2[1/10\beta_1]}
	\ll 2^{2[1/10\beta_1]}\bigg(\re \sum_{p\in I_1}\frac{w_{\ell}(p)}{\sqrt{p}}X_p\bigg)^{2[1/10\beta_1]}+2^{4[1/10\beta_1]}(\log\log\log T)^{2[1/10\beta_1]}.
	\]
	Insert this bound in \eqref{jpostexp}. This yields the upper bound
	\[
	\ll N(T,2T)\bigg\{\bigg(\frac{4\beta_1^{1/2}}{10e}\sum_{p\in I_1}\frac{w_{\ell}(p)}{p}\bigg)^{[1/10\beta_1]}+(4\beta_1^{3/4}\log\log\log T)^{2[1/10\beta_1]}\bigg\}
	+T^{(e+5)/25}(\log T)^2.
	\]
	Since $4\log\log\log T\le (\log\log T)^{1/2}$ for large $T$, the main term here is
	\[
	\ll N(T,2T)\Big(e^{-[1/10\beta_1]}+e^{-2[1/10\beta_1]\log\log\log T}\Big)\ll N(T)e^{-(\log\log T)^2/10}
	\]
	as claimed.
\end{proof}

\section{Proof of Proposition~\ref{Prop 2}}
\noindent Observe that
\[
J_k(T)=\sum_{\gamma\in \cT}|\zeta'(\rho)|^{2k}+\sum_{j=1}^{\cI-1}\sum_{\gamma\in S(j)}|\zeta'(\rho)|^{2k}+\sum_{\gamma\in S(0)}|\zeta'(\rho)|^{2k}.
\]
It suffices to estimate each of these pieces individually.

\subsection{The sum over $\cT$}
Using the inequality \eqref{newinequality} with $j=\cI$, we have
\[
\sum_{\gamma\in \cT}|\zeta'(\rho)|^{2k}\ll_k(\log T)^{2k}\sum_{\gamma\in \cT}\exp\left(2k\sum_{i=1}^{\cI}G_{i,\cI}(\gamma)\right);
\]
we have included the factor $e^{2k/\beta_{\cI}}$ in the implied constant, since $\beta_{\cI}\approx e^{-1000k}$.
By Lemma~\ref{Lem-T}, the right-hand side is
\be\label{cT}
\ll_k N(T,2T)\, (\log T)^{2k}\E\left[\exp 2k\sum_{i=1}^{\cI}G_{i,\cI}(X)\right]+T^{(e+5)/25}(\log T)^{2k+2}.
\ee
For large $T$, no two intervals $I_1,\ldots,I_{\cI}$ contain powers of the same prime. That is, we may use independence of the random variables $X_p$ to write the expectation in \eqref{cT} as
\be\label{cTexp}
\prod_{i=1}^{\cI}\E\big[\exp 2kG_{i,\cI}(X)\big].
\ee
For $i\ge 2$, we recall that
\[
G_{i,\cI}(X)=\sum_{p\in I_i}\frac{w_{\cI}(p)}{\sqrt{p}}X_p.
\]
By a standard calculation, we have
\[
\E\big[\exp\big(2kG_{i,\cI}(X)\big)\big]=\prod_{p\in I_i}I_0\bigg(2k\frac{w_{\cI}(p)}{\sqrt{p}}\bigg),
\]
where $I_0(z)=\sum_{n=0}^{\infty}\frac{(z/2)^{2n}}{(n!)^2}$ is the modified Bessel function of the first kind.

Now consider the $i=1$ term in \eqref{cTexp}. For any prime $p>\log T$, the calculation is exactly as it was above for $i\ge 2$. For $p\le \log T$, we have both $X_p$ and $X_p^2$ appearing in the expression for $G_{1,\cI}(X)$. For these, we must consider
\[
\E\left[\exp \left(2k\re\frac{w_{\cI}(p)}{\sqrt{p}}X_p+2k\re \frac{w_{\cI}(p^2)}{p}X_p^2\right)\right].
\]
Note that we may replace $\re (X_p^2)$ with $2(\re X_p)^2-1$; this is a consequence of the double angle formula for cosine. Hence the above may be written
\be\label{exp1}
\E \left[\exp \left(2k\frac{w_{\cI}(p)}{\sqrt{p}}\re X_p\right)\cdot\exp \left(4k\frac{w_{\cI}(p^2)}{p}(\re X_p)^2\right)\right]\exp\left(-2k\frac{w_{\cI}(p^2)}{p}\right).
\ee
The expectation here is
\begin{align*}
&= \sum_{m=0}^{\infty}\frac{(2k)^m}{m!}\left(\frac{w_{\cI}(p)}{\sqrt{p}}\right)^m\sum_{n=0}^{\infty}\frac{(4k)^n}{n!}\left(\frac{w_{\cI}(p^2)}{p}\right)^n\E[(\re X_p)^{m+2n}]\\
&= \sum_{h=0}^{\infty}\frac{k^{2h}}{(2h)!}\left(\frac{w_{\cI}(p)}{\sqrt{p}}\right)^{2h}\sum_{n=0}^{\infty}\frac{k^n}{n!}\left(\frac{w_{\cI}(p^2)}{p}\right)^n\binom{2(h+n)}{h+n}.
\end{align*}
This follows from direct calculation of the moments
\[
\E[(\re X_p)^{m+2n}]=
\begin{cases}
\binom{2(h+n)}{h+n}2^{-2(h+n)}&\hbox{if } m=2h,\\
0 &\hbox{if } m \text{ is odd}.
\end{cases}
\]
Note that $w_{\cI}(p)\le 1$ and $w_{\cI}(p^2)\le \frac12$. Isolating the first three terms (those for which $h,n\le 1$) of the double sum and using the elementary inequality $\binom{2(h+n)}{h+n}\le 2^{2(h+n)}$ for the rest of the terms, we see that the expectation in \eqref{exp1} is
\[
1+\frac{k^2w_{\cI}(p)}{p}+\frac{2kw_{\cI}(p^2)}{p}+O\bigg(\frac{e^{3k}}{p^2}\bigg)=I_0\bigg(2k\frac{w_{\cI}(p)}{\sqrt{p}}\bigg)\exp\bigg(2k\frac{w_{\cI}(p^2)}{p}\bigg)\big(1+O_k(1/p^2)\big).
\]
It follows that \eqref{exp1} is
\[
\le I_0\Big(2k\frac{w_{\cI}(p)}{\sqrt{p}}\Big)\big(1+O_k(1/p^2)\big),
\]
and so
\[
\E\bigg[\exp\bigg(2k\re\sum_{n\le T^{\beta_{\cI}}}\frac{w_{\cI}(n)}{\sqrt{n}}X_n\bigg)\bigg]\le \prod_{p\le T^{\beta_j}}I_0\left(2k\frac{w_{\cI}(p)}{\sqrt{p}}\right)\cdot \prod_{\tilde{p}\le \log T}\big(1+O_k(1/\tilde{p}^2)\big).
\]
Note that, as $T\to \infty$, the product over $\tilde{p}$ converges to some constant depending only on $k$. Since $I_0(2x)\le e^{x^2}$, $w_{\cI}(p)\le 1$ and $w_{\cI}(p^2)\le \frac{1}{2}$, we conclude that this expectation is
\[
\ll_k\exp\bigg(k^2\sum_{p\le T^{\beta_{\cI}}}\frac{1}{p}\bigg)\ll_k (\log T)^{k^2}.
\]
This last bound along with \eqref{cTexp} implies
\[
\sum_{\gamma\in\cT}|\zeta'(\rho)|^{2k}\ll_k(\log T)^{k(k+2)}+T^{(e+5)/25}(\log T)^{2k+2}.
\]

\subsection{The sums over $S(j)$}
Consider $1\le j\le \cI-1$. We proceed as we did for the sum over $\cT$, but we use Lemma~\ref{Lem-S(j)} instead of Lemma~\ref{Lem-T}. By \eqref{newinequality} we have
\be\label{sj1}
\sum_{\gamma\in S(j)}|\zeta'(\rho)|^{2k}\ll_k e^{2k\beta_j^{-1}}(\log T)^{2k}\sum_{\gamma\in S(j)}\exp\left(2k\sum_{i=1}^jG_{i,j}(\gamma)\right).
\ee
Lemma~\ref{Lem-S(j)} implies that the right-hand side is
\begin{multline*}
\ll_k e^{2k\beta_j^{-1}}e^{-1/21\beta_{j+1}\log(1/\beta_{j+1})}N(T,2T)\, (\log T)^{2k}\E\left[\exp\left(2k\sum_{i=1}^jG_{i,j}(X)\right)\right]\\
+e^{2k\beta_j^{-1}}T^{(e+5)/25}(\log T)^{2k+2}.
\end{multline*}
The expectation here is estimated like it was in the case $j=\cI$. We also recall that $\beta_{j+1}=20\beta_j$. It follows that the right-hand side of \eqref{sj1} is
\[
\ll_k e^{2k\beta_j^{-1}}e^{-1/420\beta_j\log(1/\beta_{j+1})}N(T,2T)(\log T)^{k^2}\\
+e^{2k\beta_j^{-1}}T^{(e+5)/25}(\log T)^{2k+2}.
\]
Note that $$2k-\frac{1}{420}\log(1/\beta_{j+1})\le-\frac{8k}{21},$$
since $\beta_{j+1}\le e^{-1000k}$ for $j\le \cI-1$. Hence we see that
\be\label{sj2}
\sum_{\gamma \in S(j)}|\zeta'(\rho)|^{2k}\ll_k e^{-8k/21\beta_j}N(T,2T)(\log T)^{k^2}+e^{2k\beta_j^{-1}}T^{(e+5)/25}(\log T)^{2k+2}.
\ee
Observe that
\[
\sum_{j=1}^{\cI-1}e^{-8k/21\beta_j}\ll_k 1.
\]
Thus summing \eqref{sj2} over $1\le j\le \cI-1$ yields
\[
\sum_{j=1}^{\cI-1}\sum_{\gamma\in S(j)}|\zeta'(\rho)|^{2k}
\ll_k N(T,2T)(\log T)^{k^2}+T^{(e+5)/25}(\log T)^{2k+2}.
\]
\subsection{The sum over $S(0)$}
By H\"older's inequality, we have
\[
\sum_{\gamma\in S(0)}|\zeta'(\rho)|^{2k}\le \Big(\sum_{\gamma\in S(0)}1\Big)^{1/q}\Big(\sum_{T<\gamma\le 2T}|\zeta'(\rho)|^{2[2k+1]}\Big)^{1/p},
\]
where $p=[2k+1]/k$, $q=1-\frac1p$ and $[x]$ is the greatest integer less than or equal to $x$. 
We estimate the first sum on the right-hand side with the second part of Lemma~\ref{Lem-S(j)}. For the second sum we use \eqref{Mi} with
$\varepsilon=1$. Note that $p\ge 2$ and $q\le \frac12$. It follows that
\begin{align*}
\sum_{\gamma\in S(0)}|\zeta'(\rho)|^{2k}&\ll_k N(T,2T)\,e^{-(\log\log T)^2/10q}(\log T)^{([2k+1]^2+1)/p}\\
&\le N(T,2T)\,\exp\big(-(\log\log T)^2/5+2([k+2]^2+1)\log\log T\big),
\end{align*}
which is $\ll N(T,2T)$ as $T\to \infty$. Combining this with the estimates for the sums over $\cT$ and the other $S(j)$ completes the proof.

\section{Sketch of the proof of Theorem~\ref{Thm 2}}
\noindent Here we describe how to modify the proof of Theorem~\ref{Thm 1} in order to prove Theorem~\ref{Thm 2}. Similar to Proposition~\ref{Prop 2}, we consider
\[
\sum_{T<\gamma\le 2T}|\zeta(\rho+\alpha)|^{2k}
\]
for a complex number $\alpha$ with $|\alpha|\le (\log T)^{-1}$. By the functional equation for the zeta-function, it suffices to assume $\re(\alpha)\ge 0$ (see the proof of Theorem 1.2 in \cite{Mi}). The approach is largely the same as it was for moments of $|\zeta'(\rho)|$. We modify the weight $w_j(n)$ introduced in Section 4 by defining
\[
w_j(n;\alpha)=\frac{w_j(n)}{n^{\re(\alpha)}}.
\]
This leads us to define
\[
G_{i,j}(t;\alpha)=\re\sum_{n\in I_i}\frac{w_j(n;\alpha)}{\sqrt{n}}n^{-i(t+\im(\alpha))}
\]
and similarly
\[
G_{i,j}(X;\alpha)=\re\sum_{n\in I_i}\frac{w_j(n;\alpha)}{\sqrt{n}}X_n.
\]
We use the inequality
\[
\log |\zeta(\rho+\alpha)|\le \sum_{i=1}^jG_{i,j}(\gamma;\alpha)+\frac{\log T}{\log x}+O(1),
\]
which is essentially Harper's~\cite{Ha} Proposition~\ref{Prop 1}. The proof of the upper bound
\[
\sum_{T<\gamma\le 2T}|\zeta(\rho+\alpha)|^{2k}\ll_k(\log T)^{k^2}
\]
claimed at the end of \S 2 relies on the obvious analogue of Lemma~\ref{Lem-mixed moments}, where $G_{i,j}(\gamma;\alpha)$ takes the place of $G_{i,j}(\gamma)$. The key difference, in comparison with \eqref{gamma} and \eqref{gammasum}, is that we must consider sums of the form
\be\label{alphasum}
\sum_{T<\gamma\le 2T}\prod_{i=1}^j\prod_{l=1}^{\ell_i}n_{i,l}^{ie_{i,l}(\gamma+\im(\alpha))}.
\ee
The diagonal terms are still those for which
\[
\prod_{i=1}^j\prod_{l=1}^{\ell_i}n_{i,l}^{e_{i,l}}=1,
\]
so we obtain the same main term as in the proof of Lemma \ref{Lem-mixed moments}. Again, we use Lemma~\ref{Lem-Landau} to handle the off-diagonal terms, i.e., those with
\[
\prod_{i=1}^j\prod_{l=1}^{\ell_i}n_{i,l}^{e_{i,l}}\neq1;
\]
as in the proof of Lemma~\ref{Lem-mixed moments}, we can assume this double product is $>1$ by combining terms which are complex conjugates. Separating the $n_{i,l}^{ie_{i,l}\im (\alpha)}$ factors from the $n_{i,l}^{ie_{i,l}\gamma}$, we see that \eqref{alphasum} is
\be\label{alphalandau}
=-\frac{T}{2\pi}\frac{\Lambda(n_{1,1}^{e_{1,1}}\cdots n_{j,\ell_j}^{e_{j,\ell_j}})}{\sqrt{n_{1,1}^{e_{1,1}}\cdots n_{j,\ell_j}^{e_{j,\ell_j}}}}\prod_{i=1}^j\prod_{l=1}^{\ell_i}n_{i,l}^{ie_{i,l}\im(\alpha)}\\
+O\Big(\sqrt{n_{1,1}^{e_{1,1}}\cdots e_{j,\ell_j}^{e_{j,\ell_j}}}(\log T)^2\Big).
\ee
Take real parts like in \eqref{postlandau}. The sign of the leading term here depends on the sign of
\be\label{cos}
\re \prod_{i=1}^j\prod_{l=1}^{\ell_i}n_{i,l}^{ie_{i,l}\im(\alpha)}=\cos \bigg(\im(\alpha) \log \prod_{i=1}^j\prod_{l=1}^{\ell_i}n_{i,l}^{e_{i,l}}\bigg).
\ee
Recall that $n_{i,l}\le T^{\beta_i}$ and $\ell_i\le 2e^2k\beta_i^{-3/4}$. This implies
\[
\prod_{i=1}^j\prod_{l=1}^{\ell_i}n_{i,l}^{e_{i,l}}\le T^{10e^2ke^{-250k}}.
\]
Note that $10e^ke^{-250k}$ is at most $e/25$. Since $|\im(\alpha)|\le (\log T)^{-1}$, we see that
\[
\bigg|\im(\alpha) \log \prod_{i=1}^j\prod_{l=1}^{\ell_i}n_{i,l}^{e_{i,l}}\bigg|< \frac{\pi}{2}.
\]
It follows that \eqref{cos} is positive, and consequently the leading term in \eqref{alphalandau} is negative. Thus we may ignore the leading term and still obtain an upper bound. The rest of the proof proceeds as before.

\begin{acknowledgements}\label{ackref}
The work herein comprised part of the author's Ph.D. thesis at the University of Rochester. The author is grateful to Prof. Steven Gonek for inspiring work on the problem and would also like to thank the referees for their helpful comments and suggestions.

The author also extends their sincere gratitude to the Leverhulme Trust (RPG-2017-320) for postdoctoral fellowship support through the research project grant ``Moments of $L$-functions in Function Fields and Random Matrix Theory". 
\end{acknowledgements}


\begin{thebibliography}{0}
	\bibitem{BuGoMi}
	Bui, H. M., S. M. Gonek, and M. B. Milinovich. ``A hybrid Euler-Hadamard product and moments of $\zeta'(\rho)$.'' {\em Forum Math. 27} (2015), no. 3, 1799--1828.
	
	\bibitem{BuHB}
	Bui, H. M. and D. R. Heath-Brown.
	``On simple zeros of the Riemann zeta-function.'' \emph{Bull. Lond. Math. Soc.} 45 (2013), no. 5, 953--961.
	
	\bibitem{CoGhGo}
	Conrey, J. B., A. Ghosh, and S. M. Gonek.
	``Simple zeros of the Riemann zeta-function.'' \emph{Proc. London Math. Soc.} (3) 76 (1998), no. 3, 497--522. 
	
	\bibitem{Da}
	Davenport, H. {\em Multiplicative Number Theory (3rd ed.)}, Graduate Texts in Mathematics 74, Springer-Verlag, New York, 2000.
	
	\bibitem{FoSoZa}
	Ford, K., K. Soundararajan, and A. Zaharescu. ``On the distribution of imaginary parts of zeros of the Riemann zeta function II.'' \emph{Math. Ann.} 343 (2009), no. 3, 487--505.
	
	\bibitem{FoZa}
	Ford, K. and A. Zaharescu. ``On the distribution of imaginary parts of zeros of the Riemann zeta function.'' {\em J. Reine Angew. Math.} 579 (2005), 145--158.
	
	\bibitem{Fu}
	Fujii, A. ``On the zeros of Dirichlet $L$-functions III.'' {\em Trans. Amer. Math. Soc.} 219 (1976), 347--349.
	
	\bibitem{Go84}
	Gonek, S. M. ``Mean values of the Riemann zeta function and its derivatives.'' {\em Invent. Math.} 75 (1984), no. 1, 123–-141.
	
	\bibitem{Go89}
	Gonek, S. M. ``On negative moments of the Riemann zeta-function.'' {\em Mathematika} 36 (1989), no. 1, 71--88.
	
	\bibitem{Go93}
	Gonek, S. M. ``An explicit formula of Landau and its applications to the theory of the zeta-function.'' {\em Contemp. Math.} 143 (1993), 395--413.
	
	\bibitem{GoHuKe}
	Gonek, S. M., C. P. Hughes, and J. P. Keating. ``A hybrid Euler-Hadamard product for the Riemann zeta function.'' {\em Duke Math. J.} 136 (2007), no. 3, 507--549.
	
	\bibitem{Ha}
	Harper, A. ``Sharp conditional bounds for moments of the Riemann zeta function.'' Preprint (2013). \url{arxiv.org/abs/1305.4618v1}
	
	\bibitem{He}
	Hejhal, D. A. ``On the distribution of $\log |\zeta'(\frac{1}{2}+it)|$.'' {\em Number theory, trace formulas and discrete groups (Oslo, 1987)}, 343--370, Academic Press, Boston, MA, 1989.
	
	\bibitem{HuKeOc}
	Hughes, C. P., J. P. Keating, and N. O'Connell, ``Random matrix theory and the derivative of the Riemann zeta-function.'' {\em R. Soc. Lond. Proc. Ser. A Math. Phys. Eng. Sci.} 456 (2000), no. 2003, 2611--2627.
	
	\bibitem{La}
	Landau, E. ``\"Uber die Nullstellen der Zetafunction.'' {\em Math. Ann.} 71 (1912), 548--564.
	
	\bibitem{Mi}
	Milinovich, M. ``Upper bounds for moments of $\zeta'(\rho)$.'' {\em Bull. Lond. Math. Soc.} 42 (2010), no. 1, 28--44.
	
	\bibitem{MiNg}
	Milinovich, M. and N. Ng. ``Lower bounds for moments of $\zeta'(\rho)$.'' {\em Intl. Math. Res. Not. IMRN} 2014, no. 12, 3190--3216.
	
	\bibitem{MiNg2}
	Milinovich, M. and N. Ng. ``Simple zeros of modular $L$-functions.'' {\em Proc. Lond. Math. Soc.} (3) 109 (2014), no. 6, 1465–-1506.
	
	\bibitem{Mo}
	Montgomery, H. L. ``The pair correlation of zeros of the zeta function.'' Analytic number theory (Proc. Sympos. Pure Math., Vol. XXIV, St. Louis Univ., St. Louis, Mo., 1972), pp. 181--193. Amer. Math. Soc., Providence, R.I., 1973. 
	
	\bibitem{Ng1}
	Ng, N. ``The distribution of the summatory function of the M\"obius function.''	\emph{Proc. London Math. Soc.} (3) 89 (2004), no. 2, 361--389.
	
	\bibitem{Ng}
	Ng, N. ``The fourth moment of $\zeta'(\rho)$.'' {\em Duke Math. J.} 125 (2004), no. 2, 243--266.
	
	\bibitem{Ra1}
	Radziwi\l\l, M. ``Gaps between zeros of $\zeta(s)$ and the distribution of zeros of $\zeta'(s)$.'' {\em Adv. Math.} 257 (2014), 6--24.
	
	\bibitem{RaSo}
	Radziwi\l\l\, M. and K. Soundararajan. ``Continuous lower bounds for moments of zeta and $L$-functions.'' {\em Mathematika} 59 (2013), no. 1, 119--128.
	
	\bibitem{RaSo2}
	Radziwi\l\l\, M. and K. Soundararajan. ``Moments and distribution of central L-values of quadratic twists of elliptic curves.'' {\em Invent. Math.} 202 (2015), no. 3, 1029--1068.
	
	\bibitem{Se}
	Selberg, A. ``Contributions to the theory of the Riemann zeta-function.'' {\em Arch. Math. Naturvid.} 48 (1946), no. 5, 89--155.
	
	\bibitem{Se1}
	Selberg, A.	``Old and new conjectures and results about a class of Dirichlet series.'' \emph{Proceedings of the Amalfi Conference on Analytic Number Theory} (Maiori, 1989), 367--385, Univ. Salerno, Salerno, 1992.
	
	\bibitem{So}
	Soundararajan, K. ``Moments of the Riemann zeta function.'' {\em Ann. of Math. (2)} 170 (2009), no. 2, 981--993.
\end{thebibliography}
\end{document}